\documentclass[12pt]{amsart}
\usepackage{amssymb}
\title{Highest weight $\sl_2$-categorifications I: crystals}
\author{Ivan Losev}
\address{Department
of Mathematics, Northeastern University, Boston MA 02115 USA}
\email{i.loseu@neu.edu}
\thanks{Supported by the NSF grant DMS-0900907}
\thanks{MSC 2010: Primary 18D99,05E10; Secondary 16G99,17B10,20G05}
\renewcommand{\sl}{\mathfrak{sl}}
\newcommand{\Cat}{\mathcal{C}}
\newcommand{\OCat}{\mathcal{O}}
\newcommand{\Hecke}{\mathcal{H}}
\newcommand{\C}{\mathbb{K}}
\newcommand{\Q}{\mathbb{Q}}
\newcommand{\Z}{\mathbb{Z}}
\newcommand{\wt}{\operatorname{wt}}
\newcommand{\End}{\operatorname{End}}
\newcommand{\g}{\mathfrak{g}}
\newcommand{\cont}{\operatorname{cont}}
\newcommand{\Hom}{\operatorname{Hom}}
\newcommand{\Res}{\operatorname{Res}}
\newcommand{\Ind}{\operatorname{Ind}}
\newcommand{\Ext}{\operatorname{Ext}}
\newcommand{\ind}{\operatorname{ind}}
\newcommand{\head}{\mathsf{head}}
\newcommand{\res}{\operatorname{res}}
\newcommand{\gl}{\mathfrak{gl}}
\newcommand{\p}{\mathfrak{p}}
\newcommand{\lf}{\mathfrak{l}}
\newcommand{\Fi}{\operatorname{F}}
\newcommand{\h}{\mathfrak{h}}
\newcommand{\GL}{\operatorname{GL}}
\newcommand{\Span}{\operatorname{Span}}

\unitlength=1mm   \numberwithin{equation}{section}
\newtheorem{Thm}{Theorem}[section]
\newtheorem{Prop}[Thm]{Proposition}

\newtheorem{Lem}[Thm]{Lemma}
\theoremstyle{definition}
\newtheorem{Ex}[Thm]{Example}

\newtheorem{Rem}[Thm]{Remark}

\numberwithin{table}{section} \oddsidemargin=0cm
\evensidemargin=0cm\textwidth=16.5cm \topmargin=-1.2cm
\textheight=232mm
\begin{document}
\begin{abstract}
We define highest weight categorical actions of $\sl_2$ on  highest weight categories
and show that basically all known examples of categorical $\sl_2$-actions on highest weight categories (including
 rational and polynomial representations of  general linear groups, parabolic categories $\mathcal{O}$
of type $A$, categories $\mathcal{O}$ for cyclotomic Rational Cherednik algebras) are highest weight
in our sense. Our main result is an explicit combinatorial description of (the labels of) the crystal
on the set of simple objects. A new application of this is to determining the supports of simple modules over
the cyclotomic Rational Cherednik algebras starting from their labels.
\end{abstract}
\maketitle
\section{Introduction}
Categorical actions of Kac-Moody algebras were introduced by Chuang and Rouquier, \cite{CR}, in the case of
$\sl_2$ and by Rouquier, \cite{Rouquier_2Kac}, in general. These actions proved to be very useful in Representation
Theory. For example, in \cite{CR} they were used to prove the Broue abelian defect conjecture. It is worth
mentioning that related techniques were used in several papers before \cite{CR} although they were not formalized.

Roughly speaking, a categorical $\sl_2$-action on an abelian category $\Cat$ is a pair of biadjoint functors $E,F$
 together with certain natural transformations. A categorical action of an arbitrary Kac-Moody
algebra includes functors $E_i,F_i$ corresponding to Cartan generators and can be regarded as a collection
of categorical $\sl_2$-actions subject to some compatibility conditions.
Examples of the categories $\Cat$ that can be equipped with
categorical actions of a Kac-Moody Lie algebra $\g$ (below such categories are called $\g$-categorifications)
include many categories of interest for Representation theory, see, e.g., \cite[Section 7]{CR}. For instance, one can consider
the sum $\bigoplus_{n\geqslant 0} \C S_n-\operatorname{mod}$ of the categories of  all finite dimensional $\C S_n$-modules,
where $\C$ is an algebraically closed field. This category comes equipped with a categorical action of
$\g$, where $\g=\hat{\sl}_p$ if   $\C$ is a field of characteristic $p>0$, and $\g=\gl_\infty$ if the characteristic is $0$.
This categorification comes from the induction and restriction functors. There is a similar in spirit ``higher level'' construction
for cyclotomic Hecke algebras.

Another class of examples considered in \cite{CR} comes from the representation theory of algebraic groups
or Lie algebras. For example, we can consider the category $\GL_n(\C)-\operatorname{mod}$ of rational representations
of $\GL_n(\C)$. It comes with a categorical $\g$-action (with $\g$ as above) that is induced from tensoring with $\C^n$
and $(\C^n)^*$. This category has a polynomial analog $\bigoplus_{d\geqslant 0} \operatorname{Rep}^d(\GL)$, where
$\operatorname{Rep}^d(\GL)$ stands for the ``stable'' category of polynomial representations of $\GL$ of degree $d$
(``stable'' means that we consider the representations of $\GL_n$ with $n\geqslant d$).
Also there are higher level analogs of these categories: parabolic categories $\mathcal{O}$ over $\mathfrak{gl}_n$,
where one has categorical actions of $\gl_\infty$. Yet another, more recent, example
comes from the representation theory of cyclotomic Rational Cherednik algebras, \cite{Shan},\cite{GM}.

The categories described in the previous paragraph all have an additional structure, a highest weight structure
(another name: a quasi-hereditary structure). That is, they have a distinguished collection of objects, standard
objects, that have properties of Verma modules in the BGG category $\mathcal{O}$. Two natural questions then arise.
First, what are reasonable compatibility relations between  highest weight and  categorification structures?
Second, assuming that the structures are compatible, what implications for the representation theory does this have?
In this paper we give some version of an answer to the first question (in the case of $\sl_2$) and also
describe an application: a combinatorial description of the crystal associated to a categorical action.

The crystal under consideration is on the set of the simples in $\Cat$. The crystal operators will be recalled
below. In all of highest weight categories recalled above the simples are parameterized by some combinatorial
objects. For instance, the simples in $\GL_n(\C)-\operatorname{mod}$ are parameterized by (dominant) weights,
while the simples in  $\bigoplus_{d\geqslant 0} \operatorname{Rep}^d(\GL)$ are parameterized by partitions.
In the case of the latter category, the crystal has an explicit representation theoretic meaning: it describes the $d-1$ degree part of
the  socle (i.e., the sum of all simple subobjects) in the restriction of an irreducible object in $\operatorname{Rep}^d(\GL_n(\C))$
to $\GL_{n-1}(\C)$, see, for example, \cite[Theorem C]{BK_restr}.

The combinatorial description of the crystal was known previously for all the categories above, see \cite{Kleshchev}
(for $\operatorname{Rep}(\GL_n)$), \cite{BKshift} (for the parabolic categories $\mathcal{O}$),
with an exception of the categories $\mathcal{O}$ of cyclotomic Rational Cherednik algebras, there the description was only known under
restrictions on parameters participating in the definition of the algebra, \cite{GL}.
For example, in the case of polynomial representations of $\GL$ the description
is given in terms of addable and removable boxes in Young diagrams and we will see that
this is a more or less general pattern.  We remark that the descriptions of \cite{Kleshchev},\cite{BKshift} require some
non-trivial and technical computations. Let us also remark that in the Cherednik case the crystal,
perhaps, has the most transparent representation theoretic meaning: it carries some information about the
supports of irreducible modules.

The main goal of this paper is to produce a combinatorial description of
the crystal of a highest weight $\sl_2$-categorification in a uniform way. The crucial part of the argument will
be to verify that certain Ext's between standard objects and irreducible objects vanish. We remark that our problem is
not to determine the crystal up to an isomorphism (like in, say, \cite{LV}) -- this is known and relatively easy in all
examples we consider.  The point is that for each label (e.g., Young diagram) we can completely describe in combinatorial terms how the crystal operators act on it.

Let us mention another source of  examples of highest weight categorifications, where the description of a crystal
is known. This is a construction due to Webster, \cite{Webster}. In a way, the structure of a crystal is an easy corollary
of Webster's (highly nontrivial) construction. We remark, however, that to identify Webster's categories with
classical ones is also a nontrivial task, see, for example, \cite{SW}.

Let us now describe the structure of this paper. Sections \ref{S_sl_2_cat},\ref{S_hw}
do not contain any new material.  In Section \ref{S_sl_2_cat} we will
recall some standard facts about $\sl_2$-categorifications including the crystal
structure on the set of simple objects. In Section \ref{S_hw} we will recall the definition
of a highest weight category and examples of highest weight categories mentioned above
together with categorical actions. Then in Section \ref{S_hw_sl2_cat} we will
define highest weight categorical $\sl_2$-actions and explain  why the actions
recalled in Section \ref{S_hw} are highest weight. Finally, in Section \ref{S_crystal}
we will state and prove our main result, Theorem \ref{Thm:crystal_main}, on the combinatorial description of the crystal.
Then we will recall a relationship between the crystal and the supports of irreducible
modules for the cyclotomic Rational Cherednik algebras obtained in \cite{Shan},\cite{shanvasserot}.

{\bf Acknowledgements.} I would like to thank Jonathan Brundan, Iain Gordon, Alexander Kleshchev, Peter Tingley
and Ben Webster for stimulating discussions.

\section{$\sl_2$-categorifications}\label{S_sl_2_cat}
\subsection{$\sl_2$-categorifications: definitions}\label{SS_sl2_def}
Our exposition here follows \cite{CR}, where $\sl_2$-\!\! categorifications
(=categorical $\sl_2$-actions) were introduced. Below when we consider the
Lie algebra $\sl_2$ we always mean an algebra over $\Q$.

Let $\C$ be a field.
Let $\Cat$ be an artinian $\C$-linear abelian category. Following \cite[5.1]{CR}, by a {\it weak} $\sl_2$-categorification on
$\Cat$ one means a pair of exact endofunctors $E,F$ of $\Cat$, where $E$ is left adjoint to $F$
with fixed unit and counit morphisms $\epsilon:\operatorname{Id}\rightarrow FE, \eta:EF\rightarrow \operatorname{Id}$. These data are supposed  to satisfy
\begin{itemize}
\item The action of the operators $e:=[E],f:=[F]$ on $[\Cat]:=\Q\otimes_{\Z} K(\Cat)$ induced by the functors
$E,F$ respectively produce a locally finite action of $\sl_2$.
\item The classes of simples in $[\Cat]$ are weight vectors.
\item $F$ is isomorphic to the left adjoint of $E$.
\end{itemize}


A weak $\sl_2$-categorification on $\Cat$ is called an $\sl_2$-categorification, \cite[5.2]{CR}, if it comes equipped,
in addition, with functor morphisms $X\in \End(E), T\in \End(E^2)$ and numbers $q\in \C^\times$
and $a\in \C$ with $a\neq 0$ provided $q\neq 1$ subject to the following conditions:
\begin{itemize}
\item $(1_ET)\circ (T1_E)\circ (1_E T)=(T1_E)\circ (1_ET)\circ (T1_E)$ in $\End(E^3)$. Here and below
$1_ET$ denotes the endomorphism of $E^3$ that is obtained by applying $T$ to the second and third
copies of $E$. The notation $T1_E$ has a similar meaning.
\item $(T+1_{E^2})\circ(T-q 1_{E^2})=0$.
\item $T\circ (1_EX)\circ T=\begin{cases}q X1_E, &q\neq 1,\\
X1_E-T, &q=1.\end{cases}$
\item $X-a$ is nilpotent.
\end{itemize}

The notion of a morphism of  (weak) categorifications is introduced in a natural way,
see  \cite[5.1,5.2]{CR}.

Below we sometimes, following \cite{CR}, write $E_+$ for $E$ and $E_-$ for $F$.

To finish the subsection let us provide a prototypical example of a categorification, see
\cite[7.2]{CR}.  Consider the affine Hecke algebra $\Hecke_q^{aff}(n)$, where $q\neq 1$, generated by
elements $T_1,\ldots,T_{n-1},X_1,\ldots,X_n$ subject to the usual relations.
Fix a collection $\underline{Q}$ of nonzero complex numbers $Q_0,\ldots,Q_{\ell-1}$ and consider
the cyclotomic quotient $\Hecke^{\underline{Q}}_q(n)$ of $\Hecke_q^{aff}(n)$
by the relation $\prod_{i=0}^{\ell-1} (X_1-Q_i)=0$. Consider the category $\Cat:=\bigoplus_{n\geqslant 0}\Hecke^{\underline{Q}}_q(n)$-$\operatorname{mod}$. For every $n$ we have a natural inclusion $\Hecke^{\underline{Q}}_q(n-1)\hookrightarrow \Hecke^{\underline{Q}}_q(n)$
making $\Hecke^{\underline{Q}}_q(n)$ into a free (left or right) $\Hecke^{\underline{Q}}_q(n-1)$-module.
So the category $\Cat$ comes equipped with exact restriction (from $n$ to $n-1$) and induction (from $n-1$ to $n$)
endofunctors. Clearly, $X_n\in \Hecke^{\underline{Q}}_q(n)$ commutes with $\Hecke^{\underline{Q}}_{q}(n-1)$.
Pick $a\in \C^\times$ and consider the functors $E=\bigoplus_n E_n,F=\bigoplus F_n$.  Here
$E_n$ is the generalized eigen-functor for the action of $X_n$ on $\Res: \Hecke^{\underline{Q}}_q(n)$-$\operatorname{mod}\rightarrow
\Hecke^{\underline{Q}}_q(n-1)$-$\operatorname{mod}$ with eigenvalue $a$.  The functor $F_n$ is obtained in a similar
way from $\Ind: \Hecke^{\underline{Q}}_q(n-1)$-$\operatorname{mod}\rightarrow \Hecke^{\underline{Q}}_q(n)$-$\operatorname{mod}$.



\subsection{$\sl_2$-categorifications: properties}\label{SS_SL2_prop}
Here we will list some properties of $\sl_2$-\!\! categorifications.

\begin{Prop}[\cite{CR}, Proposition 5.5]\label{Lem:cat_wt_decomp}
Let $\mathcal{C}$ be a weak $\sl_2$-categorification. Let $\Cat_a$ denote the full
subcategory of $\Cat$ consisting of all objects whose class in $[\Cat]$ lies in the $a$-weight
space. Then $\Cat=\bigoplus_{a\in \Z}\Cat_a$ and $E\Cat_a\subset \Cat_{a+2},
F\Cat_a\subset \Cat_{a-2}$.
\end{Prop}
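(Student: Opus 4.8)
The plan is to analyze the action of $\sl_2$ on the Grothendieck group $[\Cat]$, which by hypothesis is a locally finite $\sl_2$-module. First I would recall the relevant representation theory of $\sl_2$: in any locally finite $\sl_2$-module, the operator $h = [e,f]$ acts diagonalizably with integer eigenvalues, and the ambient space decomposes as a direct sum of its (integer) weight spaces. Applying this to $[\Cat]$, write $[\Cat] = \bigoplus_{a \in \Z} [\Cat]_a$, where $[\Cat]_a$ is the $a$-weight space for the $h$-action. The category $\Cat_a$ is then defined as the full subcategory of objects whose class lands in $[\Cat]_a$; one should check $\Cat_a$ is a genuine (Serre) subcategory, i.e. closed under subquotients and extensions, which follows because in an exact sequence the class of the middle term is the sum of the classes of the outer terms, so if the middle lies in $[\Cat]_a$ and the outer terms are weight vectors, they too must lie in $[\Cat]_a$.

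The next step is to establish the decomposition $\Cat = \bigoplus_{a \in \Z} \Cat_a$ at the categorical (not just Grothendieck-group) level. Here I would use the second axiom of a weak categorification: the classes of simple objects are weight vectors. Thus each simple object $L$ has its class $[L]$ lying in a single weight space $[\Cat]_{a}$ for some well-defined $a \in \Z$, so $L \in \Cat_a$. Since $\Cat$ is artinian, every object has a finite filtration with simple subquotients; grouping the simple constituents by their weight and using that each $\Cat_a$ is closed under extensions, one obtains that every object is a direct sum of objects lying in the various $\Cat_a$. The key point making this an honest direct sum decomposition of the category (rather than merely a filtration) is that $\Hom(\Cat_a, \Cat_b) = 0$ for $a \neq b$: a morphism between objects supported in distinct weight spaces has image a subquotient of both, forcing it to vanish. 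This orthogonality is what upgrades the filtration into a block-type direct sum.

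Finally I would verify the shift property $E\Cat_a \subset \Cat_{a+2}$ and $F\Cat_a \subset \Cat_{a-2}$. This is immediate on Grothendieck groups: the functors $E, F$ induce the raising and lowering operators $e = [E], f = [F]$, and in any $\sl_2$-representation $e$ maps the $a$-weight space into the $(a+2)$-weight space and $f$ maps it into the $(a-2)$-weight space. So for $M \in \Cat_a$ we have $[EM] = e[M] \in [\Cat]_{a+2}$, whence $EM \in \Cat_{a+2}$, and similarly for $F$. The main obstacle, such as it is, is the categorical direct sum statement rather than the weight-shift statement: one must argue carefully that the weight decomposition on $[\Cat]$ lifts to a decomposition of $\Cat$ itself, which rests on the exactness of $E, F$ (so that the induced operators genuinely respect short exact sequences), on the artinian hypothesis (guaranteeing finite composition series), and on the $\Hom$-orthogonality between distinct weight subcategories. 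Since this is Proposition 5.5 of \cite{CR}, I would expect the argument to be essentially the one just outlined.
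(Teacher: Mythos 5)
The paper does not actually prove this statement; it is imported verbatim from \cite[Proposition 5.5]{CR}, so your argument has to be measured against the proof given there. The easy parts of your sketch are fine: the weight-space decomposition of the locally finite $\sl_2$-module $[\Cat]$, the weight-shift property via $[EM]=e[M]$, and the fact that each $\Cat_a$ is closed under subquotients and extensions. (For the latter, though, your justification is slightly off: the outer terms of a short exact sequence are not assumed to be weight vectors --- only simples are. The correct route is that $K(\Cat)$ is free on the classes of simples, each of which is a weight vector, and $[M]$ is a non-negative integral combination of them; hence $[M]$ lies in a single weight space if and only if all composition factors of $M$ do.)

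The genuine gap is in the passage from ``every object has a filtration with subquotients in the various $\Cat_a$'' to ``$\Cat=\bigoplus_a\Cat_a$''. You assert that $\Hom(\Cat_a,\Cat_b)=0$ for $a\neq b$ is ``what upgrades the filtration into a block-type direct sum''. It is not: to split a category into blocks one needs, in addition, $\Ext^1(S,S')=0$ for simples $S,S'$ of distinct weights. Hom-orthogonality alone is compatible with non-split extensions --- the category of representations of the quiver $\bullet\rightarrow\bullet$ has two simples with no nonzero morphisms between them in either direction, yet it is not the direct sum of the two corresponding Serre subcategories, because of the indecomposable of length two. This $\Ext^1$-vanishing between simples of different weights is precisely the non-trivial content of \cite[Proposition 5.5]{CR}; it cannot be deduced from the axiom that classes of simples are weight vectors (which is all your argument uses at this point), and its proof requires genuinely invoking the categorification structure --- the exactness and biadjointness of $E$ and $F$ and a comparison of the quantities $h_{\pm}$ for the simples occurring in a putative non-split extension. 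As written, your argument establishes only a canonical filtration of each object by weights, not the asserted direct sum decomposition of $\Cat$.
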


Following \cite{CR}, we introduce some notation. For a simple object $S$ in a weak  $\sl_2$-categorification
$\Cat$ set $h_?(S):=\max\{i| E_?^i S\neq 0\}$ with $?=+,-$ and $d(S)=h_-(S)+h_+(S)+1$. It is clear, in particular,
that for $S\in \Cat_a$ we have $a=h_-(S)-h_+(S)$.


\begin{Lem}[\cite{CR}, Lemma 5.11]\label{Lem:obj_subs}
Let $\Cat$ be a weak $\sl_2$-categorification. Let $M$ be an object in $\Cat$ such that  for any its simple
quotient $S$ one has $d(S)\geqslant d$. Then for any simple quotient $T$ of $E^i_\pm M$ we have $d(T)\geqslant d$.
The same is true for subobjects instead of quotients.
\end{Lem}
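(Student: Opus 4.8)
The plan is to reduce the statement to a single elementary case and then extract the decisive inequality from biadjointness together with the $\sl_2$-action on $[\Cat]$. First I would carry out two formal reductions. The case of subobjects follows from the case of quotients by passing to the opposite category $\Cat^{\mathrm{op}}$: the pair $(E,F)$ induces a weak $\sl_2$-categorification there (biadjointness, exactness and local finiteness are all preserved), the quantities $h_\pm$ and hence $d$ are unchanged since $E^i_\pm S=0$ is a vanishing condition on objects, and subobjects in $\Cat$ are exactly quotients in $\Cat^{\mathrm{op}}$. The two signs $\pm$ are interchanged by the symmetry $E\leftrightarrow F$, $a\leftrightarrow -a$, under which $h_+\leftrightarrow h_-$ and $d$ is invariant, so it suffices to treat $E_+=E$ and quotients. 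Finally, writing $E^iM=E(E^{i-1}M)$ and inducting on $i$, it is enough to prove: if every simple quotient of an object $N$ has $d\geqslant d$, then so does every simple quotient of $EN$.

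So let $T$ be a simple quotient of $EN$; the surjection $EN\twoheadrightarrow T$ corresponds under the adjunction $E\dashv F$ to a nonzero map $N\to FT$. Its image $N'$ is a nonzero quotient of $N$ contained in $FT$, and (using the finiteness of $\Cat$) I let $S$ be a simple quotient of $N'$. Then $S$ is simultaneously a simple quotient of $N$, so that $d(S)\geqslant d$ by hypothesis, and a composition factor of $FT$. It therefore remains to show that $d(T)\geqslant d(S)$ for every composition factor $S$ of $FT$.

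I would deduce this from two bounds on $h_\pm$. Set $q=h_-(T)$; since $F^{q+1}T=0$, the computation $[F^q(FT)]=f^q[FT]=f^{q+1}[T]=[F^{q+1}T]=0$ in $[\Cat]$, combined with the positivity $[F^q(FT)]=\sum_U m_U[F^qU]$ over the composition factors $U$ of $FT$, forces $F^qS=0$, i.e. $h_-(S)\leqslant h_-(T)-1$. For the other bound set $p=h_+(T)$; here powers of $E$ do not commute with $F$, so I would instead use the relation $[e,f]=h$ on $[\Cat]$, together with $h[T]=a[T]$ and $e^{p+1}[T]=0$, to obtain $e^{p+2}f[T]=0$, hence $E^{p+2}(FT)=0$ as an object. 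The same positivity argument then yields $E^{p+2}S=0$, i.e. $h_+(S)\leqslant h_+(T)+1$. Adding the two bounds gives
\[
d(S)=h_+(S)+h_-(S)+1\leqslant\bigl(h_+(T)+1\bigr)+\bigl(h_-(T)-1\bigr)+1=d(T),
\]
so that $d(T)\geqslant d(S)\geqslant d$, which completes the reduced case.

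The step I expect to be the main obstacle is the upper bound $h_+(S)\leqslant h_+(T)+1$. The companion $h_-$-bound is immediate because it involves only powers of a single functor and pure positivity, but for $h_+$ the naive class-level reasoning can fail: a composition factor $S$ of $FT$ could a priori be supported in a larger $\sl_2$-isotypic component than $FT$ itself, through cancellation among the classes of the various composition factors. What rescues the argument is that the object $E^{h_+(T)+2}(FT)$ genuinely vanishes — a consequence of the $\sl_2$-commutation relation rather than of positivity alone — after which exactness and positivity propagate this vanishing to each composition factor. Verifying this vanishing carefully, and confirming that the standing finiteness assumptions on $\Cat$ guarantee the existence of the simple quotient $S$, are the points that require attention.
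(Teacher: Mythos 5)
Your proof is correct. Note that the paper does not actually prove this statement --- it is quoted from \cite[Lemma 5.11]{CR} --- and your argument is essentially the one given there: use the adjunction to turn a simple quotient $T$ of $E^i_\pm M$ into a nonzero map $M\to F^i_\pm T$, whose image supplies a simple quotient $S$ of $M$ that is also a composition factor of $F^i_\pm T$, and then bound $d(S)\leqslant d(T)$ via the $\sl_2$-relations on $[\Cat]$ together with positivity and linear independence of the classes of simples (your two inequalities $h_-(S)\leqslant h_-(T)-1$ and $h_+(S)\leqslant h_+(T)+1$ are exactly the content of \cite[Lemma 5.10]{CR}, and your treatment of the delicate $h_+$ bound via the vanishing of the object $E^{h_+(T)+2}(FT)$ is sound).
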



The following theorem summarizes some results obtained in \cite[Lemma 5.13, Proposition 5.20]{CR}
(a part of (3) is actually in the proof of Proposition 5.20).

\begin{Prop}\label{Prop:simples}
Let $\Cat$ be an $\sl_2$-categorification, $i\leqslant n$ be non-negative integers,
and $S$ a simple in $\Cat$ with $h_+(S)=n$. Then the following holds.
\begin{enumerate}
\item The functor $E^i$ decomposes into the sum of $i!$ copies of a functor $E^{(i)}$.
\item  The socle and the head of $E^{(i)}S$ are isomorphic to the same simple object,
say $T$ (depending on $i$).
\item Furthermore,
for any other simple subquotient $T'$ of $E^{(i)}S$ we have $E^{n-i}T'=0$.
\end{enumerate}
The same holds if we replace $E$ with $F$ and $h_+(\bullet)$ with $h_-(\bullet)$.
\end{Prop}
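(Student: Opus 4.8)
The plan is to establish the three statements by working inside the Grothendieck group $[\Cat]$, where the relations of $\sl_2$ are available, and then transferring representation-theoretic information (socle, head, simple subquotients) back to the category using the adjunction between $E$ and $F$ together with the structural results already recalled. First I would treat part (1): this is the standard categorical $\mathfrak{sl}_2$ fact that the divided power functor $E^{(i)}$ exists as a direct summand of $E^i$ whose multiplicity is $i!$. The relevant input is the action of $X\in\End(E)$ and $T\in\End(E^2)$: on $E^i$ these generate an action of the (degenerate or nondegenerate, depending on whether $q=1$) affine Hecke algebra $\Hecke^{aff}_q(i)$, and the idempotent projecting onto the generalized $a$-eigenspace decomposition yields a decomposition of $E^i$ into $i!$ copies of a single indecomposable functor $E^{(i)}$. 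I would simply cite that this is exactly \cite[Lemma 5.13]{CR}, since the excerpt explicitly attributes part (1) there, and record the numerical consequence that $[E^{(i)}S]=\tfrac{1}{i!}e^i[S]$ in $[\Cat]$.

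For part (2), the strategy is to exploit biadjointness. Since $E$ is biadjoint to $F$, the functor $E^{(i)}$ is biadjoint to $F^{(i)}$, so $E^{(i)}$ is both left and right exact and, crucially, preserves the property of having a simple socle/head up to the combinatorics of the weight space. The key computation is in $[\Cat]$: with $h_+(S)=n$, the vector $[S]$ sits in an irreducible-like string, and $e^i[S]$ is a positive combination of classes of simples; I would show using Lemma \ref{Lem:obj_subs} that among all simple subquotients of $E^{(i)}S$ there is a unique one, call it $T$, of maximal $d$-value, and that this $T$ occurs in both the head and the socle. Concretely, any simple quotient $S'$ of $S$ has $d(S')=d(S)$, so by Lemma \ref{Lem:obj_subs} every simple quotient $T$ of $E^{(i)}_{\pm}S$ satisfies $d(T)\geqslant d(S)$, and symmetrically for subobjects; combining this with a count in $[\Cat]$ of how many simples of the top $d$-value can appear forces the head and socle each to be a single copy of the same $T$. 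The self-duality (biadjointness) of $E^{(i)}$ is what makes the head and the socle coincide rather than merely both be simple.

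For part (3), I would argue that $E^{n-i}T=0$ for every simple subquotient $T'$ of $E^{(i)}S$ other than the distinguished $T$ by tracking $h_+$-values along the string. Since $h_+(S)=n$, applying $E^{(i)}$ moves us $i$ steps up the weight string, and the distinguished constituent $T=\head(E^{(i)}S)$ has $h_+(T)=n-i$, so $E^{n-i}T\neq 0$ but $E^{n-i+1}T=0$. For any other constituent $T'$, the inequality $d(T')<d(T)$ from part (2) together with the identity $a=h_-(\bullet)-h_+(\bullet)$ (all constituents of $E^{(i)}S$ lie in the same weight space $\Cat_{a'}$ by Proposition \ref{Lem:cat_wt_decomp}) forces $h_+(T')<n-i$, whence $E^{n-i}T'=0$. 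This is precisely the content extracted from the proof of \cite[Proposition 5.20]{CR}. The symmetric statement for $F$ and $h_-$ follows formally by passing to the biadjoint setup, exchanging $E_+\leftrightarrow E_-$.

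The main obstacle I expect is part (2): proving that the head and socle are simple \emph{and isomorphic} to the \emph{same} $T$, rather than merely nonzero. The delicate point is controlling multiplicities of the top-$d$-value simple in $E^{(i)}S$ — a naive Grothendieck-group count only bounds total multiplicity and does not by itself exclude the head and socle being distinct simples of the same $d$-value. I would resolve this by combining the uniqueness of the maximal-$d$ constituent (which does follow from the string structure in $[\Cat]$ once one checks that $e^i$ applied to a highest-string class produces a unique new simple of top $d$-value) with the self-duality of $E^{(i)}$ under biadjunction, which identifies $\head(E^{(i)}S)$ with $\operatorname{soc}(E^{(i)}S)$ canonically. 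Verifying that self-duality in the present conventions, and that the functor $X$-eigenvalue bookkeeping does not split the top constituent, is where the real care is needed.
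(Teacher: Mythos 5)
First, a point of calibration: the paper offers no proof of this proposition at all --- the sentence immediately preceding it attributes (1) to \cite[Lemma 5.13]{CR} and (2),(3) to \cite[Proposition 5.20]{CR} and its proof, and moves on. So the comparison is really between your sketch and the argument in \cite{CR}. For part (1) you defer to \cite{CR}, as the paper does. Your bookkeeping for the head and socle constituents is sound as far as it goes: Lemma \ref{Lem:obj_subs} gives $d(T)\geqslant d(S)$ for any simple quotient or subobject $T$ of $E^{(i)}S$, and combining this with $\wt(T)=\wt(S)+2i$ and with $E^{n-i+1}E^{(i)}S$ being a summand of $E^{n+1}S=0$ correctly pins down $h_+(T)=n-i$.

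The genuine gap is in part (2), and you have located it yourself without closing it. Neither the Grothendieck-group count nor Lemma \ref{Lem:obj_subs} can show that the head of $E^{(i)}S$ is \emph{simple}: a priori two non-isomorphic simples $T_1,T_2$, both with $h_+=n-i$, could occur in the head, and $\frac{1}{i!}e^i[S]$ only controls total composition multiplicities, not how they distribute among head, socle and interior layers; your parenthetical claim that the ``string structure in $[\Cat]$'' forces a unique top-$d$ constituent is exactly the assertion that needs proof and is not a $K$-group fact. The proposed fix --- that biadjointness makes $E^{(i)}$ ``self-dual'' and canonically identifies $\head(E^{(i)}S)$ with the socle --- is circular: adjunction gives $\Hom(E^{(i)}S,T)\cong\Hom(S,F^{(i)}T)$ and $\Hom(T,E^{(i)}S)\cong\Hom(F^{(i)}T,S)$, so equating head and socle multiplicities of $T$ in $E^{(i)}S$ is the same as equating socle and head multiplicities of $S$ in $F^{(i)}T$, i.e.\ the original problem with $E$ and $F$ exchanged. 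This is precisely where \cite[Proposition 5.20]{CR} uses the non-weak structure ($X$, $T$ and the Hecke relations, via the filtration results and the reduction to minimal categorifications); the proposition is stated for genuine $\sl_2$-categorifications rather than weak ones for exactly this reason, so an argument drawing only on weak-categorification data cannot suffice. Relatedly, in part (3) you invoke ``the inequality $d(T')<d(T)$ from part (2)'', but part (2) says nothing about the constituents outside the head and socle; that inequality is the content of part (3) itself and again comes out of the analysis in \cite{CR}, not out of $[\Cat]$.
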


Another useful and interesting property of an $\sl_2$-categorification $\Cat$
that will not be used in this paper is that on $\Cat_a$ one has
$EF\oplus \operatorname{Id}^{\oplus \max(0,-a)}\cong FE\oplus \operatorname{Id}^{\oplus \max(a,0)}$.

\subsection{More general categorifications}
To a simply laced quiver $Q$  one can assign the Kac-Moody algebra $\g(Q)$.
Let $I$ be the set of vertices of $Q$ and let $e_i,f_i$ be the Cartan generators
of $\g(Q)$. Then one can introduce the notion of a weak $\g(Q)$-categorification
on $\Cat$  similarly to the above (``locally finite'' becomes ``integrable''). This structure includes
exact functors $E_i,F_i, i\in I,$ together with fixed adjointness morphisms.
The notion of a genuine $\g(Q)$-categorification is more complicated than in
 the $\sl_2$-case, see \cite{Rouquier_2Kac} (with an exception of finite
and affine type $A$). We will not need the definition of a $\g(Q)$-categorification. The only
thing that we will use is that if $E_i,F_i, i\in I,$ define a $\g(Q)$-categorification,
then for each $i$ the pair $E_i,F_i$ defines an $\sl_2$-categorification.

\subsection{Crystals}\label{SS_crystals}
In this paper we consider $\sl_2$-crystals. An $\sl_2$-crystal  is a set $C$ equipped
with maps $\wt:C\rightarrow \Z, \tilde{e},\tilde{f}:C\rightarrow C\sqcup\{0\},
h_+,h_-:C\rightarrow \Z_{\geqslant 0}$. These maps should satisfy the following conditions
\begin{itemize}
\item[(i)] $\wt=h_--h_+$.
\item[(ii)] $\tilde{e}c=0$ if and only if $h_+(c)=0$. Similarly, $\tilde{f}c=0$ if and only if $h_-(c)=0$.
\item[(iii)] For $c,c'\in C$ the equality $\tilde{e}c=c'$ is equivalent to $\tilde{f}c'=c$.
\item[(iv)] If $c'=\tilde{e}c$, then $h_+(c')=h_+(c)-1, h_-(c')=h_-(c)+1$.
\end{itemize}

Let $C,C'$ be $\sl_2$-crystals. A map $\varphi:C\rightarrow C'$ is said to be a morphism
of crystals if it intertwines $h_-,h_+,\tilde{e},\tilde{f}$, i.e., $h_-(\varphi(c))=h_-(c),
h_+(\varphi(c))=h_+(c), \wt(\varphi(c))=\wt(c), \tilde{e}\varphi(c)=\varphi(\tilde{e}c),
\tilde{f}\varphi(c)=\varphi(\tilde{f}c)$ (of course, we set $\varphi(0)=0$).
In particular, if $C\subset C'$ and the inclusion
is a morphism of crystals one says that $C$ is a subcrystal of $C'$.


\begin{Ex}\label{Ex:main_crystal}
The following example will be of great importance. Consider the set $C=\{+,-\}^n$.
So an element of $C$ is an ordered $n$-tuple of $+$'s and $-$'s. Let us define
the {\it reduced form} of an element $t\in C$ as follows. This will be an
$n$-tuple whose elements are $+,-$ or $0$. We transform $t=(t_1,\ldots,t_n)$ step by step
as follows: for any $a<b$ with $t_a=-, t_b=+, t_i=0$ for $a<i<b$ we replace
$t_a,t_b$ with $0$'s. We continue these transformations while possible and so
we stop when in $t$ no $+$ appears to the right of a $-$. This is the reduced form
of interest to be denoted by $t^{red}$. It is easy to check that $t^{red}$ is well-defined.
Also we remark that $t^{red}_i=t_i$ or $0$.
Define $h_+(t), h_-(t)$ as the number of $+$'s and $-$'s in the reduced
form of $t$. Further, let $\tilde{e}t$ be the sequence obtained from $t$ by
changing $t_i$ from $+$ to $-$, where $i$ is the largest index such that $t^{red}_i=+$.
We set $\tilde{e}t=0$ if no such index $i$ exists.
Similarly, let $\tilde{f}t$ be the sequence obtained from $t$ by changing $t_j$
from $-$ to $+$, where $j$ is the smallest index such that $t^{red}_j=-$.
We set $\tilde{f}t=0$ if no such index $j$ exists.
It is straightforward to check that $C$ together with these structures is a crystal.
%
\end{Ex}

In fact, to any $\sl_2$-categorification $\Cat$ one can assign a crystal in a standard way.
Namely, $C$ is the set of simples, the functions $h_+,h_-$
are as defined in  \ref{SS_SL2_prop} and $\wt(S)=a$ if $S\in \Cat_a$. Further, $\tilde{e}S$
is 0 if $ES=0$, and $\tilde{e}S$ is a unique simple object in the socle (equivalently, in the head) of $ES$ if $ES\neq 0$,
see Proposition \ref{Prop:simples}.
The map $\tilde{f}$ is defined similarly using $F$ instead of $E$. The condition (iii) follows
from  $\Hom_\Cat(ES,T)=\Hom_{\Cat}(S,FT)$.


Similarly, one can introduce a crystal for $\g(Q)$ and produce such a crystal from a $\g(Q)$-categorification.
We will not need this.

\section{Highest weight categories}\label{S_hw}
\subsection{General definition}
As before, $\C$ stands for a field.
Recall that by a highest weight category one means a pair $(\Cat,\Lambda)$ of an artinian $\C$-linear abelian category $\Cat$
and a poset $\Lambda$ equipped with a collection of objects $\Delta(\lambda)\in \Cat$, one for each $\lambda\in \Lambda$.
These data are supposed to satisfy the following conditions.
\begin{itemize}
\item[(HW1)] $\End_{\Cat}(\Delta(\lambda))=\C$.
\item[(HW2)] There is a unique simple quotient $L(\lambda)$ of $\Delta(\lambda)$,
and each simple in $\Cat$ is isomorphic to precisely one $L(\lambda)$.
\item[(HW3)] For each $\lambda\in \Lambda$  there is a projective object $P(\lambda)$
equipped with a filtration $P(\lambda)=F_0\supset F_1\supset F_2\ldots$
such that $F_0/F_1=\Delta(\lambda)$ and $F_i/F_{i+1}=\Delta(\lambda_i)$ with $\lambda_i>\lambda$ for all $i>0$.
\item[(HW4)] The BGG reciprocity holds: for all $\lambda,\mu\in \Lambda$ the multiplicity of $L(\mu)$ inside $\Delta(\lambda)$
equals to the multiplicity of $\Delta(\lambda)$ inside $P(\mu)$.
\end{itemize}

For a highest weight category $\Cat$ let $\Cat^\Delta$ denote the full exact subcategory of $\Delta$-filtered
objects, i.e., those that have a filtration whose successive quotients are of the form $\Delta(\lambda),\lambda\in \Lambda$.

Now let us recall the definition of costandard objects. Following \cite[Proposition 4.19]{rouqqsch}, this is  a unique
set of objects $\nabla(\lambda)$ indexed by $\Lambda$ such that $(\Cat^{opp}, \nabla(\lambda))$ is
a highest weight category, and $\Ext^i(\Delta(\lambda),\nabla(\mu))=\C$ if $i=0, \lambda=\mu$
and $0$ else. Recall, \cite[Lemma 4.21]{rouqqsch} that an object $N\in \Cat$ lies in $\Cat^\Delta$
if and only if $\Ext^1(N,\nabla(\lambda))=0$ for all $\lambda\in \Lambda$.

\subsection{Representations of $\GL$}\label{SS_GL}
Assume from now on that the field $\C$ is algebraically closed.
Consider the category $\Cat:=\operatorname{Rep}(\GL_n(\C))$ of all rational finite dimensional
representations of $\GL_n(\C)$. This is a highest weight category: the standard objects are
the Weyl modules $\Delta(\lambda)$, where $\lambda$ is a strictly dominant weight, i.e., $\lambda=(\lambda_1,\ldots,\lambda_n)$,
where $\lambda_1> \lambda_2>\ldots> \lambda_n$ are  integers,
see \cite{Jantzen} for details (the highest weight of $\Delta(\lambda)$
is $\lambda-\rho$, where $\rho:=(0,-1,-2,\ldots,1-n)$). For an ordering  we can take the usual  ordering on
the dominant weights: $\lambda\leqslant \mu$ if  $\mu-\lambda$ is a linear combination of roots
with non-negative integral coefficients.

The categorification structure is introduced as follows, see \cite[7.5]{CR}.
Consider the tensor Casimir $\Omega=\sum_{i,j}e_{ij}\otimes e_{ji}\in \gl_n\times\gl_n$, where $e_{ij}$ is the unit matrix $(\delta_{ki}\delta_{lj})_{l,k=1}^d$. For $M\in \Cat$ and $i\in \Z$ let $F_iM$ (resp., $E_i M$) be the $i$-th (resp., $-n-i$-th) generalized eigenspace of $\Omega$ on $\C^n\otimes M$ (resp., on $(\C^n)^*\otimes M$). Of course, $E_i=E_j, F_i=F_j$ if $i=j$ in $\C$.
So we get a $\gl_\infty$-categorification if $\operatorname{char}\C=0$ or an $\hat{\sl}_p$-categorification if
$\operatorname{char}\C=p$.

Let us now explain the categorification structure on the polynomial representations of $\GL$, see \cite{HY}
for details. Let $\operatorname{Rep}^d(\GL_n(\C))$ denote the subcategory in $\operatorname{Rep}(\GL_n(\C))$
consisting of all polynomial representations of degree $d$. The simple $L(\lambda)$ (equivalently, the
standard $\Delta(\lambda)$) is polynomial of degree $d$ if and only if $\lambda_n\geqslant 0, \sum_{i=1}^n \lambda_i=d$.
If $\lambda<\mu$ and $L(\mu)$ is a polynomial representation, then so is $L(\lambda)$. It follows that
$\operatorname{Rep}^d(\GL_n(\C))$ is a highest weight category, whose standard objects are still the Weyl
modules. The categories $\operatorname{Rep}^d(\GL_n(\C))$ are mutually equivalent (as highest weight categories)
as long as $n\geqslant d$. Any of these categories is denoted by $\operatorname{Rep}^d(\GL(\C))$.
In \cite{HY} it was shown how to modify the construction above to produce a categorification of $\bigoplus_{d=0}^\infty
\operatorname{Rep}^d(\GL(\C))$ (a modification is necessary because $E_i M$ may not be polynomial if $M\in \operatorname{Rep}^d(\GL_n(\C))$).
We remark that the objects in $\bigoplus_{d=0}^\infty \operatorname{Rep}^d(\GL(\C))$ are naturally parameterized
by Young diagrams.

\subsection{Parabolic categories $\OCat$}
Assume $\C$ has characteristic $0$.
Pick a positive integer $n$ and consider the Lie algebra $\g=\gl_n$.
Next, pick a collection $\underline{n}:=(n_1,\ldots,n_k)$ of positive integers summing to
$n$. Let $e_1,\ldots,e_n$ be the tautological basis in $\C^n$. Let
$\p$  be the parabolic subalgebra that stabilizes the subspaces
$$\Span(e_1,\ldots,e_{n_1+n_2+\ldots+n_i}), \quad i=1,\ldots,k$$
and let $\lf$ be its Levi subalgebra preserving the subspaces
$$\Span(e_{n_1+\ldots+n_{i-1}+1},\ldots, e_{n_1+\ldots +n_i}),\quad i=1,\ldots,k.$$

Consider the category $\OCat^{\underline{n}}$ consisting of all $\g$-modules with integral central
characters, where $\p$ acts locally finitely and $\lf$ acts semisimply. This is a highest
weight category, where standard objects are parabolic Verma modules $\Delta(\lambda)$ with
$\lambda$ being a {\it parabolically strictly dominant} weight in the sense that
$\lambda_1>\ldots>\lambda_{n_1},\lambda_{n_1+1}>\ldots>\lambda_{n_2},\ldots, \lambda_{n_1+\ldots+n_{k-1}+1}>\ldots>\lambda_n$.
An ordering on $\OCat^{\underline{n}}$ is chosen as in Subsection \ref{SS_GL}.

The category $\OCat^{\underline{n}}$ comes equipped with a $\gl_{\infty}$-categorification,
see \cite[7.4]{CR}, analogously to Subsection \ref{SS_GL}.
A similar construction works for parabolic categories $\mathcal{O}_\epsilon^{\underline{n}}$ for the Lusztig form $U_\epsilon(\gl_n)$
of the quantized enveloping algebra at a root of unity $\epsilon$. There we get a categorical $\hat{\sl}_m$-action
on the category $\mathcal{O}^{\underline{n}}_{\epsilon}$, where $m$ is the order of $\epsilon$.

\subsection{Cherednik categories $\OCat$: general case}\label{SS_Ch}
We are going to start by recalling the definition of the Rational Cherednik algebras due to Etingof and Ginzburg, \cite{EG}.
In this and a subsequent subsection we are going to assume that $\C$ is the field of complex numbers.

Let $\h$ be a complex vector space, and $W\subset \GL(\h)$ be a finite subgroup generated
by complex reflections. Recall that $s\in \GL(\h)$ is called a complex reflection if the dimension
of the fixed point subspace $\h^s$ equals $\dim \h-1$. Let $S_0,\ldots,S_r$ be all conjugacy classes
of complex reflections in $W$. For each $S_i$ pick a complex number $c_i$. Also for a complex reflection
$s$ let $\alpha_s\in \h^*, \alpha_s^\vee\in \h$ be elements vanishing on $\h^s, (\h^*)^s$, respectively,
normalized by $\langle\alpha_s,\alpha_s^\vee\rangle=2$.

Set $p:=(c_0,\ldots,c_r)$. The rational Cherednik algebra $H_p(=H_p(\h,W))$ is the quotient of the smash-product
$T(\h\oplus\h^*)\# W$ (=the semidirect tensor product of $\C W$ and $T(\h\oplus\h^*)$) by the relations
\begin{equation}\label{eq:Chered}
[x,x']=[y,y']=0, [y,x]=1- \sum_{i=0}^r c_i \sum_{s\in S_i} \langle x,\alpha_s^\vee\rangle\langle y,\alpha_s\rangle.
\end{equation}

Following \cite[4.2]{GGOR} consider the map $w\mapsto w^{-1}:W\rightarrow W$. It induces an involution on the set of conjugacy classes and hence an involution $p\mapsto p^*$ on the set of parameters. The map $x\mapsto x, y\mapsto -y, w\mapsto w^{-1}, x\in \h^*, y\in \h, w\in W$
gives rise to an isomorphism

\begin{equation}\label{eq:dual_iso} H_p(\h,W)\xrightarrow{\sim} H_{p^*}(\h^*,W)^{opp}.\end{equation}

Let us proceed now to the category $\OCat$. According to \cite{EG},  the natural homomorphism
$S(\h)\otimes \C W\otimes S(\h^*)\rightarrow H_c$ is a bijection. So, following \cite{GGOR}, we can consider the category $\OCat_p(W)(=\OCat_p(\h,W))$
consisting of all finitely generated $H_c$-modules, where the action of $\h$ is locally nilpotent. We remark that
any module in $\OCat_p(W)$ is finitely generated over $\C[\h]=S(\h^*)$. Let us give an important example of a module
in $\OCat_p(W)$. Take an irreducible $W$-module $L$ and set $\Delta(L):=(\Delta_p(L)=)H_p\otimes_{S(\h^*)\# W} L$, where $\h^*$
is supposed to act by $0$ on $L$. This is a so called standard (or Verma) module in $\OCat_p(W)$.

It turns out that $\OCat_p(W)$ together with the collection of standard modules is a highest weight category.
A partial order on $\operatorname{Irr}(W)$ can be defined as follows. Consider the {\it deformed Euler element}
$\mathbf{eu}\in H_c$ given by
\begin{equation}\label{eq:Euler}
\mathbf{eu}:=\sum_{i=1}^{\dim \h} x_iy_i + \frac{\dim \h}{2} - \sum_{i=0}^ r c_i \sum_{s\in S_i} \frac{2}{1-\lambda_s}s.
\end{equation}
Here $x_i,y_i$ are mutually dual bases of $\h^*,\h$ and  $\lambda_s$ denotes the only non-unit eigenvalue of $s$
in its action on $\h^*$. The element $\mathbf{eu}$ commutes with $W$, while $[\mathbf{eu},x]=x, [\mathbf{eu},y]=-y$
for $x\in \h^*,y\in \h$. Given a parameter $p$ define a {\it $c$-function} $c_p:\operatorname{Irr}(W)\rightarrow \C$ as follows: $c_p(E)$
is the difference between the eigenvalues of $\mathbf{eu}$ on $E\subset \Delta(E)$ and on $\operatorname{triv}\subset \Delta(\operatorname{triv})$. We set $E>_pE'$ if $c_p(E)-c_p(E')\in \mathbb{Z}_{<0}$. Clearly, if $L(E')$ appears in $\Delta(E)$, then $E>_p E'$.

An important tool to study the category $\OCat_p(W)$ is the KZ functor from \cite{GGOR}. It is a surjective
exact functor from $\OCat_p(W)$ to the category of modules over the Hecke algebra $\Hecke_p(W)$, whose parameters
are recovered from $p$. An important property of the KZ functor is that it is fully faithful on projectives.


Now let us recall the duality for the categories $\OCat$ from \cite[4.2]{GGOR}. Take a module $M\in \OCat_p(W)$.
The space $\Hom_{\C}(M,\C)$ has a natural structure of a right $H_p(\h,W)$-module and thanks to
(\ref{eq:dual_iso}) of a left $H_{p^*}(\h^*,W)$-module. Let $D(M)$ be the span of all generalized
$\mathbf{eu}$-eigenvectors in $\Hom_{\C}(M,\C)$. This is a module in $\OCat_{p^*}(\h^*,W)$. It is easy
to show that $D^2$ is the identity functor.
The costandard objects in $\OCat_p(W)$ are given by $\nabla_p(E)=D(\Delta_{p^*}(E^*))$.

\subsection{Cherednik categories $\OCat$: cyclotomic case}\label{SS_Ch_cycl}
The categories $\mathcal{O}$ for general Rational Cherednik algebras do  not give rise to categorifications.
The latter appear only in the {\it cyclotomic} case that we are going to describe now.

Suppose $W=G(\ell,1,n)$, where $\ell>1,n\geqslant 1$, is the wreath
product of $S_n$ and the group $\mu_\ell$ of $\ell$-th roots of $1$. That is,  $W:=S_n\ltimes \mu_\ell^n$ acts
on $\h:=\C^n$ in a natural way. For $n>1$ there are following $\ell$ classes of complex reflections in $W$:
\begin{itemize}
\item $S_0$ consisting of elements of the form $(ij)\gamma_i\gamma_j^{-1}$, where $(ij)$ is the transposition
in $S_n$ swapping $i$ and $j$, and $\gamma_i,\gamma_j$ are elements in the $i$-th and $j$-th copies of $\mu_\ell$
inside of $\mu_\ell^n$,
\item $S_i, i=1,\ldots,\ell-1,$ consisting of the elements $\gamma_i$ with $\gamma=\exp(2\pi\sqrt{-1}j/\ell), j=0,1,\ldots,\ell-1$.
\end{itemize}

We remark that for $n=1$ there are $\ell-1$ conjugacy classes:  $S_0$ is absent.

In fact, it is convenient to use another set of parameters. Pick a complex number $\kappa$ and set $c_0:=-\kappa$. The case $\kappa=0$ is non-interesting because the algebra $H_p$ in this case decomposes as $ (H^1_p)^{\otimes n}\# S_n$, where $H^1_p$ is the similar
algebra for $n=1$. Below we always assume that $\kappa\neq 0$. Also let $s_0,\ldots,s_{\ell-1}$ be complex numbers. Then we set
\begin{equation}\label{eq:ss}
c_i:=-\frac{1}{2}(1+\kappa\sum_{j=1}^{\ell-1}\left(\exp(-ij\cdot 2\pi\sqrt{-1}/\ell)-1\right)(s_j-s_{j-1})), i=1,\ldots,\ell-1.
\end{equation}
We remark that two collections ${\bf s}=(s_0,\ldots,s_{\ell-1}), {\bf s}':=(s_0,\ldots,s_{\ell-1})$ give rise to the
same parameters $c_1,\ldots,c_{\ell-1}$ if and only if $s'_i-s_i$ is independent of $i$.

Let us proceed to the category $\mathcal{O}$. In this case we will write $\OCat_p(n)$ instead of $\OCat_p(W)$.

First, let us recall the classical combinatorial description
of $\operatorname{Irr}(W)$: the $W$-irreducibles are parameterized by $\ell$-multipartitions $\lambda:=(\lambda^{(0)},\ldots,\lambda^{(\ell-1)})$
of $n$. Namely, consider the subgroup $G(\lambda,1,\ell)=\prod_{i=0}^{\ell-1} G(|\lambda^{(i)}|,1,\ell)\subset G(n,1,\ell)$.
Here and below for a partition $\mu=(\mu_1,\mu_2,\ldots)$ we set $|\mu|:=\sum_i \mu_i$.
View $\lambda^{(i)}$ as a representation of $S_{|\lambda^{(i)}|}$. Further, let $\lambda^{(i)}(r)$ denote the representation
of $G(|\lambda^{(i)}|,1,\ell)$, that coincides with $\lambda^{(i)}$ as an $S_{|\lambda^{(i)}|}$-module, while for $\gamma\in\mu_\ell$
the element $\gamma_i$ acts by the scalar $\gamma^r$. The irreducible $W$-module corresponding to $\lambda$
is induced from the $G(\lambda,1,\ell)$-module $\lambda^{(0)}(0)\boxtimes\lambda^{(1)}(1)\boxtimes\ldots \boxtimes \lambda^{(\ell-1)}(\ell-1)$.

Let us now provide a formula for the $c$-function, obtained in \cite{rouqqsch}. We will express the $c$-function $c_p(\lambda)$
in terms of the presentation $p=(\kappa, s_0,\ldots,s_{\ell-1})$. We represent a partition $\mu$ as a Young diagram
with $\mu_1$ boxes in the first row, $\mu_2$ in the second row and so on. For a box $x$ lying in the $a$th row and
$b$th column of $\lambda^{(i)}$ we define its ${\bf s}$-{\it shifted content} by $\cont^{\bf s}(x):=s_i+b-a$.
Then set
\begin{align}\label{eq:dfn_d_p}
d^p(x)&:= \kappa \ell \cont^{{\bf s}}(x)-i-\kappa\sum_{i=0}^{\ell} s_i,\\\label{eq:def_c_fn}
c_p(\lambda)&:=\sum_{x\in \lambda} d^p(x).
\end{align}
Up to a scalar independent of $p$ and $n$ the function $c_p(\lambda)$ coincides with the $c$-function introduced
above, see \cite[(2.3.8)]{GL}.

The Hecke algebra $\Hecke_p(W)$ is just the cyclotomic Hecke algebra $\Hecke_q^{\underline{Q}}(n)$ mentioned in
Subsection \ref{SS_sl2_def}. The parameters $q,Q_0,\ldots,Q_{\ell-1}$ are determined by $p=(\kappa, s_0,\ldots,s_{\ell-1})$
in the following way: $q:=\exp(2\pi\sqrt{-1}\kappa), Q_i:=\exp(2\pi\sqrt{-1}\kappa s_i)$.

Now we are in position to recall Shan's categorification, \cite{Shan}.
Consider the category $\OCat_p:=\bigoplus_{n\geqslant 0} \OCat_p(n)$, where $\OCat_p(0)$ is just the category
of finite dimensional vector spaces.

Etingof and Bezrukavnikov defined induction and restriction functors for rational Cherednik algebras in
\cite{BE}. Namely, let $W$ be an arbitrary  complex reflection group acting on $\h$, and let $\underline{W}$
be its parabolic subgroup. Let $\underline{\h}$ be a unique $\underline{W}$-stable complement to
the fixed point subspace $\h^{\underline{W}}$ in $\h$. Abusing the notation we denote the restriction
of $p$ to $S\cap \underline{W}$ again by $p$. According to \cite{BE} there are exact functors
$\Res_W^{\underline{W}}: \OCat_p(\h,W)\rightarrow \OCat_p(\underline{\h},\underline{W})$ and $\Ind_W^{\underline{W}}:\OCat_p(\underline{\h},\underline{W})
\rightarrow \OCat_p(\h,W)$. Shan in \cite{Shan} (see also \cite{fun_iso}) checked that these functors are biadjoint.

In particular, we can consider the case $\underline{W}=G(n-1,1,\ell)\subset G(n,1,\ell)$.
Their construction yields exact endofunctors $\Res, \Ind$ of $\OCat_p$ with $\Res:\OCat_p(n)\rightarrow
\OCat_p(n-1), \Ind:\OCat_p(n-1)\rightarrow \OCat_p(n)$. As Shan checked in \cite{Shan}, the KZ functors
intertwine the inductions/restrictions in $\OCat_p$ with the cyclotomic Hecke algebra inductions/restrictions.
Since the KZ functors are fully faithful on projectives, this yields the generalized eigen-functor decompositions
$\Res=\bigoplus_{z\in \C}E_z, \Ind=\bigoplus_{z\in \C}F_z$. The functors $E_z,F_z$ constitute an $\sl_2$-categorification
on $\OCat_p$.

We are interested in the behavior of the functors $E_z,F_z$ on the K-group $[\OCat_p]$. We have the basis $[\Delta(\lambda)]$ of $[\OCat_p]$
indexed by all multi-partitions $\lambda$. We call a box $x$ of a multipartition $\lambda$ a $z$-box if $\exp(2\pi\sqrt{-1}\kappa
\cont^{\bf s}(x))=z$. Recall that we represent $\lambda$ as a collection of Young diagrams. A box lying in $\lambda$
(i.e., in one of the diagrams $\lambda^{(i)}$) is called {\it removable} if $\lambda^{(i)}\setminus \{x\}$ is still a
Young diagram (in other words, a box in $j$-th row and $k$th column is removable if $k=\lambda^{(i)}_j$
and $\lambda^{(i)}_{j+1}<\lambda^{(i)}_j$). Similarly, a box lying outside of $\lambda^{(i)}$ is {\it addable} if
$\lambda^{(i)}\sqcup \{x\}$ is again a Young diagram (equivalently, $x$ lies in $j$th row and $\lambda^{(i)}_j+1$th column
with either $j=1$ or $\lambda^{(i)}_j<\lambda^{(i)}_{j-1}$).

\cite[Proposition 4.4]{Shan} has the following straightforward generalization.
\begin{Prop}\label{Prop:Kgroup_action}
We have $[E_z\Delta(\lambda)]=\bigoplus_x [\Delta(\lambda\setminus \{x\})]$, where the sum is taken over all
removable $z$-boxes $x$. Further, $[F_z\Delta(\lambda)]=\bigoplus_x [\Delta(\lambda\sqcup\{x\})]$, where the sum
is taken over all addable $z$-boxes $x$.
\end{Prop}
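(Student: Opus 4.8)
The plan is to establish this identity in the Grothendieck group $[\OCat_p]$ by tracking how the generalized eigenfunctors $E_z,F_z$ act on the classes $[\Delta(\lambda)]$, using the compatibility between the Cherednik-algebra induction/restriction and the corresponding functors on cyclotomic Hecke algebras. First I would recall that $\Res=\bigoplus_{z}E_z$ and $\Ind=\bigoplus_z F_z$ arise as generalized eigenfunctor decompositions for the action of a natural commuting operator (the analogue of $X_n$ acting on $\Res$), so that it suffices to compute the total classes $[\Res\Delta(\lambda)]$ and $[\Ind\Delta(\lambda)]$ in $[\OCat_p]$ and then sort the resulting standard objects $[\Delta(\mu)]$ according to which eigenvalue $z$ their ``new'' or ``removed'' box contributes. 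Since the statement for the full functor $\Res$ (that $[\Res\Delta(\lambda)]$ is the sum of $[\Delta(\lambda\setminus\{x\})]$ over all removable boxes, and dually for $\Ind$) is exactly \cite[Proposition 4.4]{Shan}, the only genuinely new content here is the refinement by the eigenvalue $z$.

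The key step is therefore the eigenvalue bookkeeping. I would argue that the operator whose generalized eigenspaces define the $E_z$ is, up to the fixed normalization, the deformed Euler element $\mathbf{eu}$: on a standard module $\Delta(E)$ this element acts with a controlled spectrum, and passing from $\Delta(\lambda)$ to $\Delta(\lambda\setminus\{x\})$ shifts the relevant eigenvalue by precisely $d^p(x)$ as computed in \eqref{eq:dfn_d_p}. Exponentiating via $q=\exp(2\pi\sqrt{-1}\kappa)$ and the relation $\exp(2\pi\sqrt{-1}\kappa\cont^{\bf s}(x))=z$ that defines a $z$-box, the removable box $x$ contributes to the summand $E_z$ exactly when $x$ is a $z$-box. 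Concretely, I would match the eigenvalue of the commuting operator $X_n$ on the Hecke side (whose eigenvalue on the induced/restricted module associated to adding box $x$ is $q^{\cont}\cdot Q_i$, i.e.\ $\exp(2\pi\sqrt{-1}\kappa\cont^{\bf s}(x))$) with the Cherednik-side eigenvalue, using that the KZ functor intertwines the two sets of induction/restriction functors and is exact, hence induces a map on $K$-groups compatible with the eigenspace decompositions.

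For the $\Ind$ statement I would run the same argument in the dual direction, either by symmetry through the duality $D$ of Subsection \ref{SS_Ch} (which exchanges $E_z$ and $F_z$ up to relabeling of parameters, and sends $\Delta(\lambda)$ to the costandard $\nabla$ while turning addable boxes into removable ones), or simply by repeating the eigenvalue computation for $\Ind$ directly, where adding a $z$-box produces the class $[\Delta(\lambda\sqcup\{x\})]$ in the summand $F_z$. Both computations reduce to the same content-versus-eigenvalue matching.

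The main obstacle I expect is the eigenvalue identification itself: verifying that the generalized $X_n$-eigenvalue (equivalently, the $\mathbf{eu}$-eigenvalue) attached to the box $x$ being added or removed is exactly $\exp(2\pi\sqrt{-1}\kappa\cont^{\bf s}(x))$, with the correct normalization constant absorbed into the definition of the functors $E_z,F_z$. This requires pinning down the precise constant relating $\mathbf{eu}$ to the $c$-function $c_p$ and to $d^p(x)$, and checking that the shift in this value under adding/removing $x$ is $d^p(x)$ as in \eqref{eq:dfn_d_p}; once this normalization is fixed, the decomposition of $[\Res\Delta(\lambda)]$ and $[\Ind\Delta(\lambda)]$ from \cite[Proposition 4.4]{Shan} splits along the $z$-boxes and the proposition follows.
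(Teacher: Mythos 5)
The paper offers no proof of this proposition at all --- it is stated as a ``straightforward generalization'' of \cite[Proposition 4.4]{Shan} --- and your proposal is exactly the intended argument: take the standard filtration of $\Res\Delta(\lambda)$ and $\Ind\Delta(\lambda)$ whose subquotients are the $\Delta(\lambda\setminus\{x\})$, resp.\ $\Delta(\lambda\sqcup\{x\})$, and sort the subquotients into the eigenfunctors $E_z,F_z$ by matching the relevant commuting operator's eigenvalue on the summand attached to a box $x$ (computed on the Hecke side via $X_n$ and transported through KZ) with $\exp(2\pi\sqrt{-1}\kappa\cont^{\bf s}(x))$. The normalization issue you flag is the right one to worry about, but it is absorbed into the very definition of a $z$-box, so your argument goes through as written.
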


\section{Highest weight $\sl_2$-categorifications}\label{S_hw_sl2_cat}
\subsection{Definition}\label{SS_hwc_def}
Assume now that $(\Cat,\Lambda)$ is  a highest weight category and that $\Cat$ is equipped
with an $\sl_2$-categorification, let $E,F$ be the categorification
functors. We say that $\Cat$ is a highest weight categorification if there are
\begin{itemize}
\item a function $c:\Lambda\rightarrow \mathbb{C}$,
\item an index set $\mathfrak{A}$, a collection of non-negative integers $n_a, a\in \mathfrak{A},$
a partition $\Lambda=\bigsqcup_{a\in \mathfrak{A}} \Lambda_a$,
\item identifications $\sigma_a: \{+,-\}^{n_a}\xrightarrow{\sim} \Lambda_a$, and functions $d_a:\{1,2,\ldots,n_a\}\rightarrow \mathbb{C}$
\end{itemize}
such that the following conditions are satisfied.

\begin{itemize}
\item[(HWC0)] The functors $E,F$ preserve the subcategory $\Cat^{\Delta}$ of $\Delta$-filtered
objects.
\item[(HWC1)] The inequality $\lambda<\mu$ implies $c(\lambda)>c(\mu)$.
\item[(HWC2)] For $a\in \mathfrak{A},t\in \{+,-\}^{n_a}$, in the $K$-group of $\Cat$ we have
$e[\Delta(\sigma_a(t))]=\sum_j [\Delta(\sigma_a(t^j))]$, where the sum
is taken over all $j\in \{1,2,\ldots,n_a\}$ such that $t_j=+$, where $t^j\in \{+,-\}^{n_a}$ is given by $t^j_k=t_k$
for $k\neq j$ and $t^j_j=-$. Similarly, $f[\Delta(\sigma_a(t))]=\sum_l [\Delta(\sigma_a(\bar{t}^l))]$, where the sum
is taken over all $l\in \{1,2,\ldots,n_a\}$ such that $t_l=-$, and $\bar{t}^j_k=t_k$
for $k\neq j$ and $\bar{t}^j_j=+$.
\item[(HWC3)] For $t^j,\bar{t}^l$ as above we have $c(t^j)=c(t)+d_a(j), c(\bar{t}^l)=c(t)-d_a(l)$.
\item[(HWC4)] Finally, for any $a$ we have $d_a(1)<d_a(2)<\ldots<d_a(n_a)$.
\end{itemize}

Recall that for complex numbers $\alpha,\beta$ we write $\alpha<\beta$ if $\beta-\alpha$ is a positive integer.
The definition is, of course, obtained by generalizing  examples.

We remark that for further results to be obtained in \cite{hw_cat_str} we will use a finer ordering
on $\Cat$ and so will need to modify the definition of a highest weight categorification making it
much more technical.


\subsection{Examples}
In this section we will show that all examples of $\sl_2$-categorifications we considered
before are actually highest weight categorifications in the sense of Subsection \ref{SS_hwc_def}.
We are going to consider the Cherednik case  in detail and only
sketch  the other (that are more standard).

Let $\OCat_p$ be the sum $\bigoplus_{n\geqslant 0}\OCat_p(n)$ of the categories
$\mathcal{O}$ for the cyclotomic Cherednik algebra $H_p(n)$
as in Subsection \ref{SS_Ch_cycl}. Assume that $\kappa$ is not integral.  
We set $\Cat:=\bigoplus_{n\geqslant 0}\OCat_p(n), E:=F_z, F:=E_z$, where $z$ is some
complex number. Then $E,F$ define an $\sl_2$-categorification on $\OCat_p$.

\begin{Lem}
The $\sl_2$-categorification $\Cat$ satisfies (HWC0).
\end{Lem}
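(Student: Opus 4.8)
The plan is to show that the functors $E=F_z$ and $F=E_z$ send $\Delta$-filtered objects to $\Delta$-filtered objects. By the characterization recalled from \cite[Lemma 4.21]{rouqqsch}, an object $N\in\Cat$ lies in $\Cat^\Delta$ if and only if $\Ext^1_{\Cat}(N,\nabla(\mu))=0$ for all $\mu$. Since $E,F$ are biadjoint (this is exactly Shan's biadjointness for the Etingof--Bezrukavnikov induction and restriction functors, \cite{BE},\cite{Shan}), the natural strategy is to move the problem across the adjunction: for $N\in\Cat^\Delta$ one has $\Ext^1(E N,\nabla(\mu))\cong\Ext^1(N,F\nabla(\mu))$ and likewise $\Ext^1(F N,\nabla(\mu))\cong\Ext^1(N,E\nabla(\mu))$, because biadjoint exact functors preserve $\Ext$-groups. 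Thus it suffices to prove that $E$ and $F$ send costandardly filtered objects to costandardly filtered objects, i.e. that they preserve $\Cat^\nabla$.

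First I would reduce the costandard statement to the standard one using the Cherednik duality functor $D$ from \cite[4.2]{GGOR}, recalled in Subsection \ref{SS_Ch}, under which $\nabla_p(E)=D(\Delta_{p^*}(E^*))$. Since $D$ interchanges $\Cat^\Delta$ and $\Cat^\nabla$ (it is a contravariant equivalence sending standards to costandards), and since the induction/restriction functors are intertwined with their analogues on the dual side by $D$ (the isomorphism (\ref{eq:dual_iso}) converts $H_p(\h,W)$ into $H_{p^*}(\h^*,W)^{opp}$ and swaps the roles of $\Res$ and $\Ind$), preservation of $\Cat^\nabla$ by $E,F$ on the $p$-side is equivalent to preservation of $\Cat^\Delta$ by the corresponding functors on the $p^*$-side. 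So everything comes down to a single assertion: the generalized eigenfunctors $E_z,F_z$ of $\Res,\Ind$ preserve the subcategory of $\Delta$-filtered objects.

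The cleanest route to that assertion is to prove it first for $\Res$ and $\Ind$ themselves, and then note that $E_z,F_z$ are direct summands (generalized eigenspaces for the action of a natural endomorphism), hence automatically inherit the property of sending $\Cat^\Delta$ into $\Cat^\Delta$ since $\Cat^\Delta$ is closed under direct summands. For $\Ind$ and $\Res$ I would appeal to the fact that these functors have an exact left/right adjoint and send standard modules to $\Delta$-filtered objects: concretely, $\Res\Delta_p(\lambda)$ and $\Ind\Delta_p(\lambda)$ each carry a filtration by standard modules, which is precisely the module-level refinement of the $K$-group computation in Proposition \ref{Prop:Kgroup_action}. This filtration of $\Res\Delta(\lambda)$ and $\Ind\Delta(\lambda)$ by standards is established in Shan's work (the proof of \cite[Proposition 4.4]{Shan} and its generalization quoted just above the statement), and since $\Cat^\Delta$ is the extension-closure of the standards and $E_z,F_z$ are exact, applying an exact functor that sends each $\Delta(\lambda)$ into $\Cat^\Delta$ to a $\Delta$-filtered object yields again a $\Delta$-filtered object.

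The main obstacle, and the step I would be most careful about, is the passage between the standard and costandard filtration properties: I must make sure that the duality $D$ and the isomorphism (\ref{eq:dual_iso}) genuinely identify $E_z$ (a generalized eigenfunctor on the $p$-side) with a generalized eigenfunctor of the same flavor on the $p^*$-side, so that the summand/eigenvalue bookkeeping matches up. An alternative that avoids the duality altogether — and which may in fact be the shortest argument — is to prove directly that both $E_z\nabla(\lambda)$ and $F_z\nabla(\lambda)$ are costandardly filtered by the $K$-theoretic shadow of Proposition \ref{Prop:Kgroup_action} applied on the dual side, using only the biadjointness of $\Res,\Ind$ together with the $\Ext^1$-vanishing criterion; I would use whichever of the two is technically lighter once the precise compatibility of $D$ with the eigenfunctor decomposition is pinned down.
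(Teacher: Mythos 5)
Your chain of reductions (adjunction to pass from $\Cat^\Delta$ to $\Cat^\nabla$, then duality to pass back) is the right framework and matches the shape of the paper's argument, but it goes in a circle at the decisive point, and the step you use to break the circle is unsupported. The adjunction argument shows that $E$ and $F$ preserve $\Cat^\Delta$ provided they preserve $\Cat^\nabla$, and the duality $D$ translates the latter into the statement that $D\circ\Res_W^{\underline{W}}\circ D$ and $D\circ\Ind_W^{\underline{W}}\circ D$ preserve $\Cat^\Delta$ --- so you are back to an assertion of the original kind, now for the \emph{duality-twisted} functors $\res_W^{\underline{W}},\ind_W^{\underline{W}}$ of Bezrukavnikov--Etingof, which are a priori different functors from $\Res_W^{\underline{W}},\Ind_W^{\underline{W}}$. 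The ingredient that breaks the circle in the paper is the nontrivial isomorphism $\Res_W^{\underline{W}}\cong\res_W^{\underline{W}}$, $\Ind_W^{\underline{W}}\cong\ind_W^{\underline{W}}$ from \cite{fun_iso}: one first proves directly that $\Res_W^{\underline{W}}$ preserves $\Cat^\Delta$ (\cite[Proposition 1.9]{Shan}), deduces by adjunction that $\Ind_W^{\underline{W}}$ preserves $\Cat^\nabla$, hence that $\ind_W^{\underline{W}}=D\circ\Ind_W^{\underline{W}}\circ D$ preserves $\Cat^\Delta$, and only then uses $\Ind_W^{\underline{W}}\cong\ind_W^{\underline{W}}$ to conclude. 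You flag this compatibility as the step you ``would be most careful about'' but never supply it, and it is not routine bookkeeping --- it is the main content of \cite{fun_iso}.

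The concrete gap is your assertion that $\Ind\Delta(\lambda)$ carries a standard filtration ``established in Shan's work'' as ``the module-level refinement of the $K$-group computation'' in Proposition \ref{Prop:Kgroup_action}. Shan's Proposition 1.9 gives the $\Delta$-filtration statement only for the restriction functor; the induction side is exactly what has to be proved here. Moreover, the $K$-group identity $[F_z\Delta(\lambda)]=\sum_x[\Delta(\lambda\sqcup\{x\})]$ does not refine to a filtration by standards: equality of classes in $K(\Cat)$ is strictly weaker than membership in $\Cat^\Delta$ (it is insensitive to how the composition factors are assembled), so invoking it here is circular --- the filtration is precisely what is to be established. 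Once the isomorphism from \cite{fun_iso} is inserted at the right place, your argument closes up and becomes the paper's proof.
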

\begin{proof}
From the definition of the functors $E_z,F_z$ it follows that one only needs to prove that
all Bezrukavnikov-Etingof functors $\Res_W^{\underline{W}}, \Ind_W^{\underline{W}}$ preserve $\Cat^{\Delta}$.  By \cite[Proposition 1.9]{Shan},
the restriction functors $\Res_W^{\underline{W}}$ preserves $\Cat^{\Delta}$. Recall that $\Ind_W^{\underline{W}}$ is right
adjoint to $\Res_W^{\underline{W}}$ and both $\Ind_W^{\underline{W}},\Res_W^{\underline{W}}$ are exact. So for any $M,N\in \Cat$ we $\Ext^i(\Res_W^{\underline{W}}(M),N)=\Ext^i(M,\Ind_W^{\underline{W}}(N))$.
Now \cite[Lemma 4.21]{rouqqsch} implies that $\Ind_W^{\underline{W}}$ preserves the subcategory $\Cat^{\nabla}\subset \Cat$
of all costandardly filtered objects. Recall  that Bezrukavnikov and Etingof, \cite{BE}, introduced functors $\res_W^{\underline{W}}:=D\circ \Res_W^{\underline{W}}\circ D,\ind_W^{\underline{W}}:=
D\circ \Ind_W^{\underline{W}}\circ D$, where $D$ is the duality functor recalled in Subsection \ref{SS_Ch}.
Since the standard and costandard objects are related via $D$, we see that $\res_W^{\underline{W}}$ preserves $\Cat^{\nabla}$,
while $\ind_W^{\underline{W}}$ preserves $\Cat^{\Delta}$. But, according to \cite{fun_iso}, $\Res_W^{\underline{W}}\cong\res_W^{\underline{W}}$
and $\Ind_W^{\underline{W}}\cong \ind_W^{\underline{W}}$. This completes the proof.
\end{proof}

Let us explain the choice of $c,\mathfrak{A},n_a, \Lambda_a, \sigma_a, d_a$ making $\Cat$ into a highest weight categorification.
For $c$ we just take the $c$-function recalled in Subsection \ref{SS_Ch}. The condition (HWC1) follows. Two multipartitions $\lambda,\mu$
belong to the same set $\Lambda_a$ if the multipartitions obtained from $\lambda$ and $\mu$ by removing all removable
$z$-boxes coincide. The following easy combinatorial lemma shows that the sets of addable and removable
$z$-boxes in $\lambda$ and $\mu$ coincide.

\begin{Lem}\label{Lem:partition}
For a (multi)partition $\mu$ let $B_z(\mu)$ denote the set of all addable and removable $z$-boxes in
$\mu$. Then for any addable $z$-box $x$ we have $B_z(\mu\sqcup x)=B_z(\mu)$.
\end{Lem}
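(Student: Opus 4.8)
The plan is to reduce the statement to a purely local computation inside a single component of the multipartition and then to exploit the hypothesis $\kappa\notin\Z$ in order to discard everything except the box $x$ itself. Write $\mu=(\mu^{(0)},\ldots,\mu^{(\ell-1)})$ and suppose the addable $z$-box $x$ sits in row $a$, column $b$ of the component $\mu^{(i)}$, so that its shifted content is $c:=\cont^{\bf s}(x)=s_i+b-a$. Since passing from $\mu$ to $\mu\sqcup x$ alters only the $i$-th component, the addable and removable boxes of every other component $\mu^{(j)}$, $j\neq i$, are literally unchanged, and hence so are their $z$-boxes. Thus it suffices to analyze component $i$.

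First I would record the elementary combinatorial fact that adding a single box $x=(a,b)$ to a partition changes the set of its corners (addable or removable boxes) only in the immediate neighborhood of $x$: the only positions whose addable/removable status can change are $x=(a,b)$ and its four neighbors $(a,b\pm1)$ and $(a\pm1,b)$. Reading off their contents, every affected box has shifted content in $\{c-1,c,c+1\}$. Moreover, at content exactly $c$ the only affected position is $x$ itself: because $x$ was addable it satisfies $\mu^{(i)}_a=b-1$ and $\mu^{(i)}_{a-1}\geq b$, and a direct check shows that after the addition $x$ becomes removable (indeed $\mu^{(i)}_{a+1}\leq b-1<b$). Hence $x$ lies in $B_z(\mu)$ as an addable box and in $B_z(\mu\sqcup x)$ as a removable box; as a box (position) it belongs to both sets. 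Here it is convenient to invoke the standard fact that within one partition the addable and removable boxes have pairwise distinct contents (they interlace along the boundary), which guarantees that $x$ is the unique corner of content $c$ both before and after the addition.

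It remains to control the affected boxes of content $c\pm1$, and this is exactly where the hypothesis $\kappa\notin\Z$ enters. Two boxes of the same component whose shifted contents differ by a nonzero integer $k$ are simultaneously $z$-boxes only if $\exp(2\pi\sqrt{-1}\kappa k)=1$, i.e.\ $\kappa k\in\Z$; for $k=\pm1$ this would force $\kappa\in\Z$, which is excluded. Since $x$ is a $z$-box of content $c$, no box of $\mu^{(i)}$ of content $c\pm1$ can be a $z$-box, and so all the corner changes occurring at contents $c-1$ and $c+1$ are invisible to $B_z$. Combining the three steps, the only change between $B_z(\mu)$ and $B_z(\mu\sqcup x)$ is that $x$ switches from addable to removable, leaving the set of $z$-boxes unchanged, whence $B_z(\mu\sqcup x)=B_z(\mu)$. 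The main obstacle I anticipate is not conceptual but bookkeeping: verifying carefully the exact list of positions whose addable/removable status changes when $x$ is added, and confirming in each boundary case (e.g.\ when $\mu^{(i)}_{a-1}=b$, so that a removable box is destroyed, or when $\mu^{(i)}_{a-1}>b$, so that a new addable box appears) that the corresponding change occurs at content $c+1$ or $c-1$ rather than at $c$.
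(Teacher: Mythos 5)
Your proof is correct and follows essentially the same route as the paper, whose entire argument is the one-line observation that adding a $z$-box only affects the sets $B_{q^{\pm 1}z}(\mu)$ with $q=\exp(2\pi\sqrt{-1}\kappa)$, which differ from $B_z(\mu)$ precisely because $\kappa\notin\Z$. You have simply filled in the local bookkeeping (the affected corners lie at shifted contents $c\pm 1$, while $x$ itself merely switches from addable to removable and so stays in $B_z$) that the paper leaves to the reader.
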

\begin{proof}
It is easy to see that adding a $z$-box affects only the sets $B_{q^{\pm 1}z}(\mu)$, where $q=\exp(2\pi\sqrt{-1}\kappa)$.
\end{proof}

For $n_a$ we take the cardinality of $B_z(\lambda),\lambda\in \Lambda_a,$ from the previous
lemma. For a $z$-box $x$ of $\lambda$  we set $d_a(x):=d^p(x)$ (where the last number is equal to $\kappa \ell \cont^{s_r}(x)-r-\kappa\sum_{i=0}^{\ell-1}s_i$
if $x$ is in $\lambda^{(r)}$). If $x\in \lambda^{(r)},y\in \lambda^{(r')}$ are $z$-boxes, then $\kappa\cont^{s_r}(x)-\kappa \cont^{s_{r'}}(y)\in \mathbb{Z}$. Also there is at most one addable/removable box with a given content in each diagram. So for different
$x,y\in B_z(\lambda)$ the numbers $d_a(x),d_a(y)$ differ by a nonzero integer. Let us number boxes $x_1,\ldots,x_{n_a}$ so that
the sequence $d_a(x_j)$ increases. Now we can define the map $\sigma_a:\{+,-\}^{n_a}\xrightarrow{\sim} \Lambda_a$.
By definition, it sends an $n_a$-tuple $t$ to the only multipartition  $\lambda(t)\in \Lambda_a$, where the box $x_i$ is in $\lambda(t)$
if and only if $t_i=-$ (and so it is removable, the boxes $x_i$ with $t_i=+$ are addable). (HWC2) follows now from Proposition \ref{Prop:Kgroup_action}. Finally, set $d_a(j):=d_a(x_j)$. (HWC4) is tautological, and (HWC3) is a consequence of (\ref{eq:dfn_d_p},\ref{eq:def_c_fn}). So we have checked that $\Cat$ is a highest weight categorification.

Let us briefly outline the other examples. Let $\C$ be an algebraically closed field.
Consider the category $\operatorname{Rep}(\GL_n)$
and fix an integer $i$. Set $E:=F_i, F:=E_i$.
Let $\lambda=(\lambda_1,\ldots,\lambda_n)$ be a highest weight.
Let $I_i(\lambda)$ be the subset of $\{1,\ldots,n\}$ consisting of all
indexes $j$ with $\lambda_j-i=0$ in $\C$.  Then $F_i\Delta(\lambda)$
has a filtration whose successive quotients  are the Weyl modules
$\Delta(\lambda+\epsilon_j), j\in I_i(\lambda),$  appearing with multiplicity $1$ if
$\lambda+\epsilon_j$ is dominant (and with multiplicity 0 else). Similarly,
$E_i\Delta(\lambda)$ has a filtration whose successive quotients  are the Weyl modules
$\Delta(\lambda-\epsilon_j), j\in I_{i+1}(\lambda),$  appearing with multiplicity $1$ if
$\lambda-\epsilon_j$ is still in $\Lambda$. See \cite[Theorems A,A']{BK_functors} for details. This shows (HWC0).
For the $c$-function we take $c(\lambda)=\sum_{i=1}^n i\lambda_i$ which implies (HWC1).
Two dominant weights $\lambda$ and $\mu$ lie in the same $\Lambda_a$ if
for each $j=1,\ldots,n$ exactly one of the following possibilities holds:
\begin{itemize}
\item $\lambda_j=\mu_j$ in $\Z$.
\item $\lambda_j=\mu_j+1$ in $\Z$ and $\mu_j=i$ in $\C$.
\item $\mu_j=\lambda_j+1$ in $\Z$ and $\lambda_j=i$ in $\C$.
\end{itemize}
Let $j_1<j_2<\ldots<j_{n_a}$ be all indexes $j$ such that $\lambda_j\neq \mu_j$ for $\lambda,\mu\in \Lambda_a$.
The $n_a$-tuple $t:=\sigma_a^{-1}(\lambda)$ for $\lambda\in \Lambda_a$ is given by $t_l:=+$ if $\lambda_{j_l}=i$
in $\C$ and $t_l=-$ if $\lambda_{j_l}=i+1$ in $\C$. We set $d_a(l):=j_l$. Now it is easy to verify the remaining axioms
(HWC2)-(HWC4).

For the remaining two categories, $\bigoplus_{d=0}^\infty \operatorname{Rep}^d(\GL)$ and $\mathcal{O}^{\underline{n}}$,
one introduces the additional structures and checks  (HWC0)-(HWC4) hold in a similar way (using
\cite{HY} and \cite{BKshift} instead of \cite{BK_functors}).

\section{Structure of the crystal}\label{S_crystal}
\subsection{Main result}
Suppose that $(\Cat,\Lambda)$ is a highest weight $\sl_2$-categorification in
the sense of Subsection \ref{SS_hwc_def}.  Recall
the index set $\mathfrak{A}$, the subsets $\Lambda_a\subset \mathfrak{A}$, the integers $n_a$,
and the bijections $\sigma_a:\{+,-\}^{n_a}\xrightarrow{\sim} \Lambda_a$ introduced in
Subsection \ref{SS_hwc_def}. Recall the crystal structure on $\{+,-\}^{n_a}$ introduced in
Example \ref{Ex:main_crystal}.

\begin{Thm}\label{Thm:crystal_main}
For each $a\in \mathfrak{A}$ the set  $\{L(\lambda), \lambda\in \Lambda_a\}$ is a subcrystal
in $\{L(\lambda), \lambda\in \Lambda\}$ and the
map $t\mapsto L(\sigma_a(t)):\{+,-\}^{n_a}\xrightarrow{\sim} \{L(\lambda), \lambda\in \Lambda_a\}$
is an isomorphism of crystals.
\end{Thm}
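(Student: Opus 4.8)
The plan is to prove the theorem by induction on $n_a$ (equivalently, on the total number of addable/removable $z$-boxes), reducing everything to statements about the action of the crystal operators $\tilde{e},\tilde{f}$ that come from the functors $E,F$. The key observation is that on the $K$-group, axiom (HWC2) says that $e$ and $f$ act on $\{[\Delta(\sigma_a(t))]\}$ exactly as the combinatorial operators on $\{+,-\}^{n_a}$ act on a tensor product of $n_a$ copies of the two-dimensional representation of $\sl_2$; so the $K$-theoretic picture is already the one from Example \ref{Ex:main_crystal}. What must be upgraded is that the \emph{crystal} operators on simples $L(\sigma_a(t))$ -- defined via socles/heads of $ES$ and $FS$ as in Subsection \ref{SS_crystals} -- agree with the purely combinatorial $\tilde{e},\tilde{f}$ on $\{+,-\}^{n_a}$, and that $\{L(\lambda)\mid\lambda\in\Lambda_a\}$ is closed under these operators (the subcrystal claim).

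\textbf{Key steps.} First I would translate the combinatorial data: for $t\in\{+,-\}^{n_a}$, the functions $h_+(t),h_-(t)$ read off from the reduced word $t^{red}$ should be shown to equal $h_+(L(\sigma_a(t)))$ and $h_-(L(\sigma_a(t)))$, and $\wt$ should match, using $\wt(S)=h_--h_+$ together with (HWC2). Since the weight is determined $K$-theoretically, and since (HWC2)--(HWC4) force the standard objects $\Delta(\sigma_a(t))$ indexed by $\Lambda_a$ to form a single string pattern under $e,f$ with the $d_a(j)$ controlling the order, the $c$-function ordering (HWC1),(HWC3),(HWC4) pins down which $\Delta$'s can appear in which $E^i L$, $F^j L$. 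The heart of the argument is then to compute the socle/head of $E L(\sigma_a(t))$ and identify it with $L(\sigma_a(\tilde{e}t))$. The plan is to apply Proposition \ref{Prop:simples} and Lemma \ref{Lem:obj_subs} to control which simples can occur, and then to use (HWC0) -- that $E,F$ preserve $\Cat^\Delta$ -- to analyze $E$ applied to standard objects via a $\Delta$-filtration whose layers are governed by (HWC2). Matching the combinatorial rule ``change the largest $+$ in $t^{red}$'' against the representation-theoretic socle requires knowing that among the standards $\Delta(\sigma_a(t^j))$ appearing in $E\Delta(\sigma_a(t))$, the one corresponding to the correct index $j$ has its $L$-quotient surviving to the socle of $EL$; the $c$-function inequalities from (HWC3),(HWC4) are what order these layers.

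\textbf{The main obstacle} will be the step that the crucial introductory paragraphs flagged: verifying that certain $\Ext$'s between standard objects and simple (or costandard) objects vanish, so that the $\Delta$-filtration of $E\Delta(\sigma_a(t))$ does not collapse in a way that destroys the expected socle. Concretely, I expect to need that $\Ext^1(\Delta(\sigma_a(t^j)),L(\sigma_a(t^{j'})))=0$ for the relevant pairs, or equivalently the vanishing of $\Ext$ against costandards as encoded in \cite[Lemma 4.21]{rouqqsch}, in order to split off the correct standard layer and locate the head/socle of $EL$. Once this vanishing is in hand, the remaining bookkeeping -- that $\tilde{e}L(\sigma_a(t))=L(\sigma_a(\tilde{e}t))$ and $\tilde{f}L(\sigma_a(t))=L(\sigma_a(\tilde{f}t))$, and that these stay inside $\Lambda_a$ -- follows by comparing the combinatorial cancellation rule defining $t^{red}$ with the string structure of the $\sl_2$-action, giving both the subcrystal property and the asserted isomorphism of crystals, so that conditions (i)--(iv) of Subsection \ref{SS_crystals} are automatically intertwined.
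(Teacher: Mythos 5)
Your proposal correctly identifies the overall shape of the argument (standard filtrations of $E\Delta$ ordered by the $c$-function via (HWC1)--(HWC4), analysis of heads/socles of $EL$, and an Ext-vanishing step as the crux), and the subcrystal claim is indeed obtained essentially as you describe, via the right exactness of the head functor applied to the surjection $E\Delta(\sigma_a(t))\twoheadrightarrow EL(\sigma_a(t))$. But there are two genuine gaps where your sketch diverges from anything that would actually close.

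First, the induction scheme. You propose induction on $n_a$, but there is no functorial or combinatorial mechanism for passing from a block $\Lambda_a$ with $n_a$ signs to one with $n_a-1$ signs; the sets $\Lambda_a$ are not nested and $E,F$ preserve each $\Lambda_a$. The paper instead inducts on the weight $w=\wt(t)$ (starting from the lowest weight $-n_a$, where everything is zero), with a secondary induction on a linear order $\succ$ on $\{+,-\}^{n}$ when identifying $\tilde e L(s)$ with $L(\tilde e s)$. This choice is forced by the structure of the Ext-vanishing result, whose hypothesis is precisely the equality $h_-(L(s'))=h_-(s')$ for all $s'$ of strictly smaller weight --- i.e., the induction on weight and the Ext vanishing are interlocked, and your scheme has no way to supply that hypothesis.

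Second, the Ext vanishing you ask for is too weak and too vague. What is needed is not merely $\Ext^1(\Delta(\sigma_a(t^j)),L(\sigma_a(t^{j'})))=0$ for ``relevant pairs,'' but a quantitative statement about higher Ext groups: $\Ext^i(\Delta(t),L)=0$ for $i\leqslant h^k_-(t)-m$, where $h^k_-(t):=h_-(t_k,\ldots,t_n)$ is a \emph{partial} depth function and $m$ is chosen with $F^mL=0$. This is proved by a decreasing induction on the position $k$, using a combinatorial lemma comparing $h^{l+1}_-$ across the layers of the standard filtration of $E\Delta(\bar t)$, together with biadjointness to convert $\Ext^q(E\Delta(\bar t),L)$ into $\Ext^q(\Delta(\bar t),FL)$. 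None of this is visible in your sketch, and the reference to vanishing against costandards (\cite[Lemma 4.21]{rouqqsch}) points in the wrong direction: that lemma characterizes $\Delta$-filtered objects, whereas the vanishing needed here is against simples. Finally, the equality $h_-(L(s))=h_-(s)$, which you treat as a translation step, is itself nontrivial: the paper first establishes $h_-(L(s))\geqslant h_-(s)$ (via the Ext vanishing, Lemma \ref{Lem:obj_subs}, and the formula $d(?)=\tfrac12(h_-(?)-\wt(?))$ twice over), and then upgrades the inequality to an equality by a counting argument using assertion (3) of Proposition \ref{Prop:simples}, which forces the multiset of values $h_-(L(s))$ to coincide with the multiset of values $h_-(s)$ in each weight. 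Without that counting step your argument cannot conclude.
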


We remark that for $\Cat=\operatorname{Rep}(\GL_n)$ Theorem \ref{Thm:crystal_main} gives the same
description of the crystal as \cite{Kleshchev}, see also \cite[Theorems B,B']{BK_functors}, while for $\Cat=\OCat^{\underline{n}}$
we recover \cite[4.3]{BKshift}. We also expect that it is possible to extend the techniques used in the proof  to
the setting of standardly filtered categories and functors more general than  $\sl_2$-categorification
functors.  This should allow to recover the results from \cite{KS}
that give a combinatorial description of the crystal for representations of the supergroup $Q(n)$.

\subsection{Preliminary considerations}
The claim that $\Lambda_a$ is a subcrystal follows from the following proposition.
\begin{Prop}\label{Prop:subcrystal}
Fix $t=(t_1,\ldots,t_{n_a})\in \{+,-\}^{n_a}$. Then either $\tilde{e}L(\sigma_a(t))=0$ or there is $j$ with $t_j=+$ such that $\tilde{e}L(\sigma_a(t))=L(\sigma_a(t^j))$, where
$t^j\in \{+,-\}^{n_a}$  is given by $t^j_l=t_l$ for $l\neq j$ and $t^j_j=-$. Similarly, either $\tilde{f}L(\sigma_a(t))=0$
or there is  $k$ with $t_k=-$ such that $\tilde{f}L(\sigma_a(t))=L(\sigma_a(\bar{t}^k))$, where
$\bar{t}^k\in \{+,-\}^{n_a}$  is given by $\bar{t}^k_l=t_l$ for $l\neq k$ and $\bar{t}^k_k=+$.
\end{Prop}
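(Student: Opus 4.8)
The plan is to compute the head of $EL(\sigma_a(t))$ directly, exploiting that it is a quotient of $E\Delta(\sigma_a(t))$, whose $\Delta$-filtration is pinned down by (HWC0) and (HWC2). Write $\lambda:=\sigma_a(t)$. If $EL(\lambda)=0$ then $\tilde{e}L(\lambda)=0$ and the first alternative holds, so assume $EL(\lambda)\neq 0$, i.e. $h_+(L(\lambda))\geqslant 1$. By Proposition \ref{Prop:simples} applied with $i=1$ (so that $E^{(1)}=E$), the object $EL(\lambda)$ has simple head, and by the definition of the crystal this head is exactly $\tilde{e}L(\lambda)=:L(\mu)$. Hence it suffices to prove that $\Hom_{\Cat}(EL(\lambda),L(\pi))\neq 0$ forces $\pi=\sigma_a(t^j)$ for some $j$ with $t_j=+$.

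First I would record an elementary observation: for any object $N\in\Cat^{\Delta}$ whose $\Delta$-filtration has subquotients $\Delta(\nu_1),\ldots,\Delta(\nu_m)$, one has $\Hom_{\Cat}(N,L(\pi))\neq 0$ only if $\pi\in\{\nu_1,\ldots,\nu_m\}$. This follows by induction on $m$: the short exact sequences of the filtration, the left-exactness of $\Hom_{\Cat}(-,L(\pi))$, and the identity $\Hom_{\Cat}(\Delta(\nu),L(\pi))=\delta_{\nu\pi}\C$ (valid since $\Delta(\nu)$ has simple head $L(\nu)$ by (HW1)--(HW2)) give the inductive step at once.

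Next I would apply the exact functor $E$ to the canonical surjection $\Delta(\lambda)\twoheadrightarrow L(\lambda)$, obtaining a surjection $E\Delta(\lambda)\twoheadrightarrow EL(\lambda)$. Any nonzero map $EL(\lambda)\to L(\pi)$ composes with this surjection to a nonzero map $E\Delta(\lambda)\to L(\pi)$, so $\Hom_{\Cat}(EL(\lambda),L(\pi))\neq 0$ implies $\Hom_{\Cat}(E\Delta(\lambda),L(\pi))\neq 0$. By (HWC0) we have $E\Delta(\lambda)\in\Cat^{\Delta}$, and since its class in the $K$-group equals $\sum_{j:\,t_j=+}[\Delta(\sigma_a(t^j))]$ by (HWC2), its $\Delta$-filtration has subquotients precisely $\Delta(\sigma_a(t^j))$ for those $j$ with $t_j=+$. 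Combining this with the elementary observation forces $\pi\in\{\sigma_a(t^j):t_j=+\}$. As $\tilde{e}L(\lambda)=L(\mu)$ is simple, we conclude $\mu=\sigma_a(t^j)$ for some $j$ with $t_j=+$, which is the assertion for $\tilde{e}$. The statement for $\tilde{f}$ is proved verbatim, replacing $E,E\Delta(\lambda)$ and the tuples $t^j$ by $F,F\Delta(\lambda)$ and the tuples $\bar{t}^k$.

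I do not expect a serious obstacle here: the argument is purely formal once one knows that $E$ and $F$ preserve $\Cat^{\Delta}$ and act on standard classes as in (HWC0)--(HWC2). In particular, the Ext-vanishing between standard and irreducible objects advertised in the introduction is \emph{not} needed for this proposition; it enters only later, to single out which index $j$ actually occurs. The present statement asks only for the existence of some such $j$, which is why the soft filtration argument suffices, whereas the precise combinatorial rule is reserved for Theorem \ref{Thm:crystal_main}.
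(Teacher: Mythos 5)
Your argument is correct and is essentially the paper's own proof: the paper phrases it via the right-exact head functor $\head(M)=\bigoplus_{\lambda}\Hom_{\Cat}(M,L(\lambda))^*\otimes_{\C}L(\lambda)$ applied to the surjection $E\Delta(\lambda)\twoheadrightarrow EL(\lambda)$ and to the standard filtration of $E\Delta(\lambda)$, which is exactly your Hom-into-simples computation. Your observation that the ordering of the $\Delta$-filtration (and hence (HWC1), (HWC3), (HWC4)) is not needed here is accurate.
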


In the proof we will need the following lemma.

\begin{Lem}\label{Lem:stand_filtr}
We keep the notation of Proposition \ref{Prop:subcrystal}. Let $j_1>j_2>\ldots>j_l$ be all indexes such that $t_{j_i}=+$.
Then there is a filtration $E\Delta(\lambda)=\Fi_0E\Delta(\lambda)
\supset \Fi_1 E\Delta(\lambda)\supset \Fi_2 E\Delta(\lambda)\supset\ldots \supset \{0\}$ such that
$\Fi_{i-1} E\Delta(\lambda)/\Fi_i E\Delta(\lambda)=\Delta(\sigma_a(t^{j_i}))$.
\end{Lem}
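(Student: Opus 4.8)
The plan is to produce the required filtration by first using (HWC0) and (HWC2) to pin down the $\Delta$-subquotients of $E\Delta(\lambda)$, and then to reorder an arbitrary $\Delta$-filtration so that the values of the $c$-function decrease from top to bottom. Write $\lambda=\sigma_a(t)$. By (HWC0) the object $E\Delta(\lambda)$ lies in $\Cat^\Delta$, so it admits some filtration by standard objects; since the multiplicity of each $\Delta(\mu)$ in a $\Delta$-filtration is determined by the class $[E\Delta(\lambda)]$ in the Grothendieck group (the classes $[\Delta(\mu)]$ forming a basis of $[\Cat^\Delta]$), (HWC2) tells me that the subquotients are exactly $\Delta(\sigma_a(t^{j_1})),\ldots,\Delta(\sigma_a(t^{j_l}))$, each occurring once.

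Next I would record the ordering of the relevant $c$-values. Since $j_1>j_2>\ldots>j_l$ and, by (HWC4), $d_a$ is strictly increasing, (HWC3) gives $c(\sigma_a(t^{j_1}))>c(\sigma_a(t^{j_2}))>\ldots>c(\sigma_a(t^{j_l}))$; in particular these values are pairwise distinct. Thus the filtration asserted in the lemma is precisely the unique one whose successive quotients appear in order of strictly decreasing $c$-value.

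The main step is a reordering argument, for which the key input is the standard fact that $\Ext^1(\Delta(\mu),\Delta(\nu))\neq 0$ forces $\nu>\mu$. I would derive this from (HW3): the kernel of $P(\mu)\twoheadrightarrow\Delta(\mu)$ is filtered by standards $\Delta(\nu')$ with $\nu'>\mu$, and applying $\Hom(-,\Delta(\nu))$ to this presentation exhibits $\Ext^1(\Delta(\mu),\Delta(\nu))$ as a quotient of $\bigoplus\Hom(\Delta(\nu'),\Delta(\nu))$; a nonzero map $\Delta(\nu')\to\Delta(\nu)$ requires $\nu'\leqslant\nu$, whence $\nu\geqslant\nu'>\mu$. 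Combined with (HWC1), this shows that in any nonsplit extension $0\to\Delta(\nu)\to N\to\Delta(\mu)\to 0$ the quotient has the larger $c$-value, i.e.\ $c(\mu)>c(\nu)$. Consequently, whenever two adjacent subquotients of a $\Delta$-filtration occur in the ``wrong'' order, the upper one $\Delta(\mu)$ having $c(\mu)<c(\nu)$ for the lower one $\Delta(\nu)$, the corresponding length-two subquotient extension must split, and the two steps may be interchanged.

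Starting from the filtration provided in the first paragraph and applying such transpositions repeatedly (a bubble sort on the distinct $c$-values), I obtain a $\Delta$-filtration of $E\Delta(\lambda)$ whose subquotients appear in order of decreasing $c$-value; by the second paragraph this is exactly the filtration claimed, with $\Fi_{i-1}E\Delta(\lambda)/\Fi_i E\Delta(\lambda)=\Delta(\sigma_a(t^{j_i}))$. I expect the only genuinely delicate point to be the justification of the elementary transposition, that an unordered pair can really be swapped inside the ambient filtration, which is where the vanishing of $\Ext^1$ enters; the identification of the subquotients and the bookkeeping of $c$-values are routine consequences of the axioms.
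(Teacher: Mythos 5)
Your proof is correct and follows essentially the same route as the paper: identify the $\Delta$-subquotients via (HWC0) and (HWC2), order their $c$-values via (HWC3)--(HWC4), and reorder the filtration using (HWC1) together with the standard fact that $\Ext^1(\Delta(\mu),\Delta(\nu))\neq 0$ forces $\nu>\mu$. The paper compresses this last reordering step into ``the claim follows from (HWC1)''; you have simply made explicit what it leaves implicit.
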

\begin{proof}
By (HWC0), $E\Delta(\lambda)\in \Cat^{\Delta}$.
Since the classes of the standard objects form a basis in the K-group, (HWC2) implies that the successive
quotients of a filtration by standards on $E\Delta(\lambda)$ are exactly $\Delta(\lambda^j)$ with $\lambda^j:=\sigma_a(t^j)$
each occurring with multiplicity $1$. By (HWC3),(HWC4), we have $c(\lambda^{j_1})>c(\lambda^{j_2})>\ldots>c(\lambda^{j_l})$
in the ordering of the highest weight category. Now the claim of the lemma follows from (HWC1).
\end{proof}

A filtration from Lemma \ref{Lem:stand_filtr} will be referred to as a {\it standard filtration}.

\begin{proof}[Proof of Proposition \ref{Prop:subcrystal}]
We will prove the first statement, the second one is completely analogous.
Set $\lambda:=\sigma_a(t), \lambda^j:=\sigma_a(t^j)$.

For an object $M\in \Cat$ let $\head(M)$ denote its head, that is, the maximal semisimple quotient. We remark
that $\head$ can be viewed as an  endofunctor of $\Cat$. This functor can be easily seen to be right exact.
Indeed, we can write $\head(M)=\bigoplus_{\lambda\in \Lambda}\Hom_{\Cat}(M,L(\lambda))^*\otimes_{\C}L(\lambda)$.

Of course, $\head(\Delta(\mu))=L(\mu)$ for all $\mu\in \Lambda$. From here, the right exactness of $\head$
and Lemma \ref{Lem:stand_filtr}  we deduce that $\head(E\Delta(\lambda))\subset \bigoplus_j L(\lambda^j)$.

Now we recall that $E$ is an exact functor so $E\Delta(\lambda)\twoheadrightarrow E L(\lambda)$.
The right exactness of $\head$ implies that $\head(E\Delta(\lambda))\twoheadrightarrow \head(EL(\lambda))$.
But the  $\head(EL(\lambda))=\tilde{e} L(\lambda)$, by definition. Thanks to the previous paragraph, we are done.
\end{proof}

Till the end of the section we write $n$ for $n_a$, $\sigma$ for $\sigma_a$. Further,
for $t\in \{+,-\}^n$ we write $\Delta(t):=\Delta(\sigma(t)), L(t):=L(\sigma(t))$.


\begin{Lem}\label{Lem:weights}
We have $\wt(t)=\wt(L(t))$.
\end{Lem}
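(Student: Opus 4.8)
The plan is to compute both sides of the asserted equality explicitly and then match them. On the combinatorial side, by axiom (i) of a crystal we have $\wt(t)=h_-(t)-h_+(t)$, where $h_\pm(t)$ count the $+$'s and $-$'s in the reduced form $t^{red}$ of Example \ref{Ex:main_crystal}. Since each reduction step deletes exactly one $-$ and one $+$, the quantity (number of $+$'s) minus (number of $-$'s) is unchanged by passing to $t^{red}$. Hence $\wt(t)$ equals the number of $-$'s in $t$ minus the number of $+$'s in $t$.

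On the categorical side, first I would observe that, by (HWC2), the subspace $\Span\{[\Delta(\sigma(t))]\mid t\in\{+,-\}^{n}\}$ of $[\Cat]$ is stable under $e=[E]$ and $f=[F]$, since each $t^j,\bar t^l$ again lies in $\{+,-\}^{n}$. The resulting $\sl_2$-module is visibly the $n$-fold tensor power $V^{\otimes n}$ of the two-dimensional representation $V=\C v_+\oplus\C v_-$: the assignment $[\Delta(\sigma(t))]\mapsto v_{t_1}\otimes\cdots\otimes v_{t_n}$ with $ev_+=v_-,\ ev_-=0,\ fv_-=v_+,\ fv_+=0$ intertwines the two actions, matching the formulas of (HWC2) term by term. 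By Proposition \ref{Lem:cat_wt_decomp} the functor $E$ raises the $\sl_2$-weight by $2$, so $v_-$ is the vector of weight $+1$ and $v_+$ the vector of weight $-1$ in $V$. Consequently $[\Delta(\sigma(t))]$ is a weight vector whose weight is (number of $-$'s) minus (number of $+$'s), i.e. exactly $\wt(t)$.

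It then remains to transfer this from $\Delta(t)$ to $L(t)$. Since the classes $[L(\mu)]$ form a basis of $[\Cat]$ consisting of weight vectors, and $[\Delta(\sigma(t))]$ is itself a weight vector of weight $\wt(t)$, writing $[\Delta(\sigma(t))]$ in this basis shows that every composition factor of $\Delta(\sigma(t))$—in particular its head $L(t)=L(\sigma(t))$, which occurs with multiplicity one—must lie in $\Cat_{\wt(t)}$. By definition $\wt(L(t))$ is the index $a$ with $L(t)\in\Cat_a$, whence $\wt(L(t))=\wt(t)$, as claimed.

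The only genuinely delicate point will be fixing the additive constant in the weight of $\Delta(\sigma(t))$: the formulas (HWC2) prescribe the $\sl_2$-action only up to the overall normalization of weights, and it is the identification with $V^{\otimes n}$ together with the convention $E\Cat_a\subset\Cat_{a+2}$ of Proposition \ref{Lem:cat_wt_decomp} that pins the weight down to be exactly (number of $-$'s) minus (number of $+$'s), rather than this quantity shifted by a global constant. Everything else is bookkeeping.
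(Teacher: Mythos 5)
Your proof is correct and follows essentially the same route as the paper's (two-sentence) proof: deduce $\Delta(\sigma(t))\in\Cat_{\wt(t)}$ from (HWC2) — which you make explicit via the identification of $\Span\{[\Delta(\sigma(t))]\}$ with $V^{\otimes n}$ — and then transfer the weight from $\Delta(t)$ to its simple quotient $L(t)$ using that the classes of simples are weight vectors. The worry in your last paragraph about an additive normalization is moot, since the weight of $[\Delta(\sigma(t))]$ is the honest $[e,f]$-eigenvalue and is therefore already pinned down by the formulas of (HWC2).
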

\begin{proof}
We remark that $L(t)$ lies in $\Cat_i$ if and only if $\Delta(t)\in \Cat_i$.
The inclusion $\Delta(t)\in \Cat_{\wt(t)}$ can be easily deduced from (HWC2).
\end{proof}

\subsection{Ext vanishing}

In this subsection we are going to prove an important technical result. For $t\in \{+,-\}^n$ and $k\in \{1,2,\ldots,n\}$ set
$h^k_-(t):=h_-(t_{k},\ldots,t_n)$. Clearly, $h_-(t)=h^1_-(t)\geqslant h^2_-(t)\geqslant\ldots\geqslant h^n_-(t)$.

We introduce a linear order on $\{+,-\}^n$: we write $t'\succ t$ if there is an index $i$
such that $t_j=t'_j$ for all $j>i$ but $t'_i=-$ and $t_i=+$.

Pick $\mu\in\Lambda$ with $\wt(\mu)=w$ and set $L:=L(\mu)$. Let $m$ be a positive integer such that $F^{m}L=0$.
If $\mu\not\in\Lambda_a$, set $k:=1$. Otherwise, let $k$ be any integer such
that $h^k_-(s)\leqslant m-1$, where $\sigma(s)=\mu$.

\begin{Prop}\label{Prop:vanishing}
Let $L,k,m$ be as in the previous paragraph. If $L=L(s)$,  assume, in addition, that $h_-(L(s'))=h_-(s')$ provided
$\wt(s')<\wt(s)$. Then $\Ext^i(\Delta(t),L)=0$
for $i\leqslant h^k_-(t)-m$.
\end{Prop}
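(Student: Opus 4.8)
The plan is to prove the statement by induction on $m$, and, for fixed $m$, by descending induction on $k$ (the base case $k=n+1$, where $h^{n+1}_-(t)=0$, being vacuous because then the range $i\leqslant -m$ is empty). The engine is biadjunction: since $E,F$ are exact and $E$ is left adjoint to $F$, the functor $E$ sends projectives to projectives, so $\Ext^i(E\Delta(t'),L)\cong\Ext^i(\Delta(t'),FL)$ for every $t'$ and every $i$. Replacing $L$ by $FL$ lowers the weight by $2$ and replaces $m$ by $m-1$, as $F^{m-1}(FL)=F^mL=0$; this is exactly what drives the induction on $m$, while the drop in weight is what makes the additional hypothesis applicable to the factors of $FL$.

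I first dispose of the cases where the level-$k$ bound already follows from the level-$(k+1)$ bound. Prepending $t_k$ to the suffix $(t_{k+1},\ldots,t_n)$ changes $h_-$ only when $t_k=-$ and that suffix has no unmatched $+$, i.e. $h_+(t_{k+1},\ldots,t_n)=0$; in that event $h^k_-(t)=h^{k+1}_-(t)+1$, and otherwise $h^k_-(t)=h^{k+1}_-(t)$. In the latter situation the claim at level $k$ is literally the claim at level $k+1$, which holds by the descending induction (here $k+1$ is again admissible, since $h^{k+1}_-(s)\leqslant h^k_-(s)\leqslant m-1$). Thus I may assume $t_k=-$ and $h_+(t_{k+1},\ldots,t_n)=0$, so that $h^k_-(t)=h^{k+1}_-(t)+1$.

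In this essential case I would flip the $k$-th coordinate, setting $t':=\bar{t}^k$, and analyze $E\Delta(t')$ through the standard filtration of Lemma \ref{Lem:stand_filtr}. There $\Delta(t)=\Delta((t')^k)$ occurs as a subquotient, the remaining standard subquotients being the $\Delta((t')^p)$ with $p\neq k$ a $+$-position of $t'$. A short combinatorial check, using $\wt=h_--h_+$ together with $h_+(t_{k+1},\ldots,t_n)=0$, shows that the quotients with $p>k$ satisfy $(t')^p\succ t$ and $h^{k+1}_-((t')^p)=h^k_-(t)+1$ (on positions $\geqslant k+1$ one turns a matched $+$ into a $-$, raising $\wt$ by $2$ while keeping $h_+=0$), whereas those with $p<k$ satisfy $(t')^p\prec t$ and $h^{k+1}_-((t')^p)=h^k_-(t)-1$. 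Grouping the $p<k$ factors into a subobject $\Fi_r$ and the $p>k$ factors into the quotient $E\Delta(t')/Q$, two long exact sequences reduce the desired $\Ext^i(\Delta(t),L)=0$ (for $i\leqslant h^k_-(t)-m$) to three vanishings: $\Ext^i(\Delta(t'),FL)=0$ in the same range; $\Ext^{i-1}(\Fi_r,L)=0$; and $\Ext^{i+1}$ of the $p>k$ factors. The last two follow from the descending induction at level $k+1$, since the bounds $h^{k+1}_-((t')^p)-m$ equal $h^k_-(t)-1-m$ and $h^k_-(t)+1-m$, matching exactly the shifted degrees $i-1$ and $i+1$.

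The remaining term is the $FL$-term, and this is where I expect the real work to lie. Each composition factor $L(\nu)$ of $FL$ has weight $\wt(\mu)-2<\wt(\mu)$ and satisfies $F^{m-1}L(\nu)=0$, so $h_-(L(\nu))\leqslant m-2$; invoking the additional hypothesis $h_-(L(\nu))=h_-(\nu)$ at this lower weight (equivalently, the lower-weight instances of $h_-(L(\cdot))=h_-(\cdot)$ carried by the ambient induction) lets me take $k'=1$ for $L(\nu)$, whence the induction on $m$ gives $\Ext^i(\Delta(t'),L(\nu))=0$ for $i\leqslant h_-(t')-(m-1)$. Since $h_-(t')\geqslant h^k_-(t')=h^{k+1}_-(t)=h^k_-(t)-1$ by monotonicity of $h^\bullet_-$, this covers $i\leqslant h^k_-(t)-m$, and dévissage over the factors of $FL$ yields the needed vanishing of $\Ext^i(\Delta(t'),FL)$. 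The delicate point is precisely this interplay: the additional hypothesis must be exactly strong enough to license $k'=1$ for the factors of $FL$, and one must check that every auxiliary $\Ext$ produced by the two filtrations lands in a range already covered by one of the two inductions — that is, the entire numerical bookkeeping of $h^k_-$ under the coordinate flip has to be kept consistent.
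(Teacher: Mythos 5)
Your proposal is correct and follows essentially the same route as the paper: flip the distinguished $-$ at position $k$ to get $\bar t$, run the standard filtration of $E\Delta(\bar t)$ through two long exact sequences, handle the middle term by biadjunction $\Ext^i(E\Delta(\bar t),L)\cong\Ext^i(\Delta(\bar t),FL)$ together with the weight-lowering hypothesis, and control the outer terms by the same $h^{k+1}_-$ bookkeeping (your Lemma~\ref{Lem:comb11}-type computations, including the equality in case $p>k$, are correct and match the paper's estimates). The only differences are cosmetic: you make the double induction (on $m$, then descending on $k$) explicit where the paper leaves the $m$-induction implicit, and you recurse to the factors of $FL$ with $k'=1$ rather than $k'=l+1$, which is harmless.
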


In the proof we will need the following combinatorial lemma.

\begin{Lem}\label{Lem:comb11}
Let $t\in \{+,-\}^n$ and let $l$ be an index with $h^l_-(t)>h^{l+1}_-(t)$.
Let $\bar{t}\in \{+,-\}^n$ be given by $\bar{t}_i=t_i$ for $i\neq l$ and $\bar{t}_l=+$.
Further, let $\Delta(t^1),\ldots, \Delta(t^N)$ be the successive subquotients of the standard filtration on $E\Delta(\bar{t})$ with $t^N\succ t^{N-1}\succ\ldots\succ t^1$. Finally, let $j$ be such that $t^j=t$. Then the following holds
\begin{enumerate}
\item $h_-^{l+1}(t^j)+1=h_-^l(t^j)$.
\item $h_-^{l+1}(t^i)=h_-^{l+1}(t^j)$ for $i<j$.
\item $h_-^{l+1}(t^i)\geqslant h^l_-(t^j)+1$ for $i>j$.
\end{enumerate}
\end{Lem}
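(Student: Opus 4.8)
My plan is to translate the combinatorics into the language of parenthesis matching and then reduce everything to two elementary facts. Encode $-$ as an opening bracket and $+$ as a closing bracket; the cancellation rule of Example \ref{Ex:main_crystal} (a $-$ followed by a later $+$) is exactly the cancellation of a matched pair, so $h_-$ (resp.\ $h_+$) counts the unmatched opening (resp.\ closing) brackets. For any word $w$ this gives the invariant $h_-(w)-h_+(w)=\#_-(w)-\#_+(w)$, since each cancellation deletes one symbol of each kind. Writing $h^k_+(t):=h_+(t_k,\dots,t_n)$, a short analysis of what happens when one prepends a single symbol to the suffix $(t_{l+1},\dots,t_n)$ shows: prepending $+$ never changes $h_-$, while prepending $-$ increases $h_-$ by one exactly when that suffix has no unmatched $+$, i.e.\ when $h^{l+1}_+(t)=0$ (otherwise the new opening bracket cancels the leftmost unmatched closing one, leaving $h_-$ unchanged). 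Hence the hypothesis $h^l_-(t)>h^{l+1}_-(t)$ forces $t_l=-$, $h^{l+1}_+(t)=0$, and $h^l_-(t)=h^{l+1}_-(t)+1$; since $t^j=t$, this last equality is precisely assertion (1).

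Next I would identify the subquotients explicitly. By (HWC2) and Lemma \ref{Lem:stand_filtr}, the $\Delta(t^i)$ are exactly the $\Delta(\bar t^{\,p})$ with $p$ running over the positions where $\bar t_p=+$, that is $p=l$ together with $P:=\{p:t_p=+\}$; here $\bar t^{\,p}$ denotes $\bar t$ with its entry in position $p$ changed to $-$. Since $\bar t^{\,l}=t$, one subquotient is $t$ itself (this is $t^j$), and for $p\in P$ the element $u^p:=\bar t^{\,p}$ agrees with $t$ except that $u^p_l=+$ and $u^p_p=-$. To place $u^p$ relative to $t$ in the order $\succ$ I simply compare at the largest index where they differ, which is $p$ if $p>l$ and $l$ if $p<l$: in the first case $u^p$ carries a $-$ against a $+$ of $t$, so $u^p\succ t$, and in the second case the reverse holds, so $u^p\prec t$. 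Thus $i>j$ corresponds exactly to $p>l$, and $i<j$ to $p<l$.

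Finally I would compute $h^{l+1}_-$ on each subquotient; this is the only genuine computation and the place to be careful. Set $H:=h^l_-(t^j)=h^l_-(t)$, so $h^{l+1}_-(t)=H-1$ and $h^{l+1}_+(t)=0$. For $p<l$ the entries in positions $>l$ are untouched, so $h^{l+1}_-(u^p)=h^{l+1}_-(t)=H-1=h^{l+1}_-(t^j)$, which is (2). For $p>l$ the suffix from $l+1$ changes in one place, a $+$ becoming a $-$; the invariant makes $h_--h_+$ jump by $2$, so $h^{l+1}_-(u^p)-h^{l+1}_+(u^p)=(H-1)+2=H+1$, and since $h^{l+1}_+(u^p)\geqslant 0$ we get $h^{l+1}_-(u^p)\geqslant H+1=h^l_-(t^j)+1$, which is (3). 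The main obstacle is really just the bracket-prepend bookkeeping of the first paragraph; once the invariant and the prepend rule are in hand, (1)--(3) fall out by inspecting the three cases $p=l$, $p<l$, $p>l$.
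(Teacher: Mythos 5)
Your proof is correct, and for parts (1) and (2) it runs essentially as in the paper: (1) comes from the observation that prepending a symbol to the suffix $(t_{l+1},\dots,t_n)$ changes $h_-$ by at most one, and does so precisely when $t_l=-$ and $h_+(t_{l+1},\dots,t_n)=0$; (2) is the remark that for $i<j$ the tuples $t^i$ and $t^j$ agree in every position $>l$ (you additionally spell out why $i<j$ corresponds to flipping a position $p<l$, which the paper leaves implicit). Where you genuinely diverge is in (3). The paper first reduces to the adjacent case $i=j+1$, via the asserted monotonicity $h^{l+1}_-(t^i)\geqslant h^{l+1}_-(t^{j+1})$ for $i>j$, and then computes $h^{l+1}_-(t^{j+1})$ explicitly in terms of the position $p$ of the first $+$ to the right of $l$, arriving at the equality $h^{l+1}_-(t^{j+1})=h^l_-(t^j)+1$. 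You instead handle all $i>j$ at once: flipping a single $+$ to a $-$ in the suffix raises the invariant $h_--h_+=\#_--\#_+$ by exactly $2$, and since the hypothesis $h^l_-(t)>h^{l+1}_-(t)$ forces $h^{l+1}_+(t)=0$, the desired bound $h^{l+1}_-(t^i)\geqslant h^l_-(t^j)+1$ falls out of $h^{l+1}_+(t^i)\geqslant 0$. Your route is shorter, bypasses the monotonicity reduction (which the paper states without proof), and gives the inequality uniformly; the paper's explicit computation buys the sharper fact that equality holds for $i=j+1$, which the lemma itself does not need but which is reused implicitly later (e.g.\ in the proof of the main theorem, where $h^{l+1}_-(t^i)=h_-(t^j)+1$ is quoted). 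If you want your version to substitute seamlessly there, note that your invariant also gives the equality for the minimal $p$, since for that $p$ one checks $h^{l+1}_+(t^i)=0$.
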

\begin{proof}
(1) follows from $h^l_-(t)>h^{l+1}_-(t)$. To prove (2) we just notice that $t^i_k=t^j_k$ for all $k>l$ provided $i<j$.

Let us prove (3). First of all, let us remark that $h_-^{l+1}(t^i)\geqslant h_-^{l+1}(t^{j+1})$ for $i>j$.
So it is enough to consider the case $i=j+1$. Let us note that $t^j_l=-, t^{j+1}_l=+$. Let $p$ be the index with $t^j_{p}=+, t^{j+1}_p=-$.
Then $t^j_k=t^{j+1}_k$ for all $k$ different from $l,p$. Since $t^j_l$ survives in the reduced form, $p>l+1$. Also $p$
is the minimal index bigger than $l$ with $t^j_p=+$ and so $t^j_{l+1}=\ldots=t^j_{p-1}=-$.
Since $t^j_l$ survives in the reduced form, we see that $p-l> h_+(t^j_p,\ldots, t^j_n)=1+h_+(t^j_{p+1},\ldots,t^j_n)$. Moreover, $h_-^{l}(t^j)=p-l-1-h_+(t^j_{p+1},\ldots,t^j_n)+h_-(t^j_{p+1},\ldots,t^j_n)$. Similarly, $h^{l+1}_-(t^{j+1})=p-l-
h_+(t^j_{p+1},\ldots,t^j_n)+h_-(t^j_{p+1},\ldots,t^j_n)$. So $h^{l+1}_-(t^{j+1})=h^l_-(t^j)+1$ and we are done.
%
\end{proof}

\begin{proof}[Proof of Proposition \ref{Prop:vanishing}]
We remark that the claim for $j=0$ just follows from $\sigma(t)\neq \mu$. Indeed, even if $\mu=\sigma(s)$ for some $s$, we have $h^k_-(t)\geqslant m> h^k_-(s)$.

We prove the statement by using the decreasing induction on $l=n,n-1,\ldots,k$ to show that the following holds:
\begin{itemize}
\item[(*)] $\Ext^i(\Delta(t),L)=0$ for $i\leqslant h^{l}_-(t)-m$.
\end{itemize}
The base $l=n$ follows from the previous paragraph.

In the proof we may assume that $h^l_-(t)>h^{l+1}_-(t)$, otherwise we are done by induction. Also we only need
to prove that $\Ext^q(\Delta(t),L)=0$ for $q=h_-^l(t)-m$, the vanishing of the remaining Ext's follows from the
inductive assumptions. We prove the claim in several steps.

{\it Step 1.}
Let $\bar{t},j$ be as in Lemma \ref{Lem:comb11}. Let $\mathcal{F}\supset\mathcal{F}_0$ be the consecutive
filtration subobjects of the standard filtration of $E\Delta(\bar{t})$ such that $\mathcal{F}/\mathcal{F}_0=\Delta(t^j)$.
Let us prove that $\Ext^q(\mathcal{F}, L)=0$. For this consider the exact sequence
$$\Ext^q(E\Delta(\bar{t}),L)\rightarrow \Ext^q(\mathcal{F}, L)\rightarrow \Ext^{q+1}(E\Delta(\bar{t})/\mathcal{F},L).$$
In the next two steps we will prove that the left and the right terms in this sequence vanish that will imply
$\Ext^q(\mathcal{F}, L)=0$.

{\it Step 2.}
Let us prove that $\Ext^q(E\Delta(\bar{t}),L)=0$. By the biadjointness of $E,F$, it is enough to show
that  $\Ext^q(\Delta(\bar{t}),FL)=0$. This will follow if we show that $\Ext^q(\Delta(\bar{t}),L')=0$
for any simple subquotient $L'$ of $FL$. But $F^{m-1}(FL)=0$ and hence $F^{m-1}L'=0$. Also
we have $h^{l+1}_-(\bar{t})=h^l_-(t)-1$ and $h_-^{l+1}(\bar{t})-(m-1)=q$. To complete the proof it
remains to show that, in the case when $L'=L(s')$ for some $s'\in \{+,-\}^n$, we have $h^{l+1}_-(s')\leqslant m-2$. But our assumption
 in the statement of the proposition says $h_-(L')=h_-(s')$. Since $F^{m-1}L'=0$, we get $m-2\geqslant h_-(L')=h_-(s')\geqslant h^{l+1}_-(s')$.   We are done by induction.

{\it Step 3.} Let us prove that $\Ext^{q+1}(E\Delta(\bar{t})/\mathcal{F},L)=0$. The object
$E\Delta(\bar{t})/\mathcal{F}$ inherits the standard filtration from $E\Delta(\bar{t})$. The successive quotients are (in the notation of Lemma \ref{Lem:comb11}) $\Delta(t^i), i>j$.
Now, thanks to assertion (3) of that lemma, for each $i>j$ we have $h^{l+1}_-(t^i)\geqslant h^{l}_-(t)+1$. Therefore, by induction, $\Ext^{q+1}(\Delta(t^i),L)=0$. It follows that $\Ext^{q+1}(E\Delta(\bar{t})/\mathcal{F},L)=0$.

{\it Step 4.} So now we know that $\Ext^q(\mathcal{F},L)=0$. To show that $\Ext^q(\Delta(t),L)=0$ consider the short
exact sequence
$$\Ext^{q-1}(\mathcal{F}_0, L)\rightarrow \Ext^q(\Delta(t^j),L)\rightarrow \Ext^{q}(\mathcal{F},L).$$
It remains to prove that $\Ext^{q-1}(\mathcal{F}_0, L)=0$ and we will do this in the next step.

{\it Step 5.} The object $\mathcal{F}_0$ again  inherits a filtration from $E\Delta(\bar{t})$.
The successive quotients are $\Delta(t^i)$ with $i<j$. According to (1) and (2) of Lemma \ref{Lem:comb11},
$h^{l+1}_-(t^i)=h^{l+1}_-(t^j)=h^l_-(t^j)-1$. So $h^{l+1}_-(t^i)-m=q-1$ and, by induction, we have
$\Ext^{q-1}(\Delta(t^i), L)=0$. Hence $\Ext^{q-1}(\mathcal{F}_0,L)=0$.
\end{proof}

\begin{Rem}
In fact, when $\mu\not\in \Lambda_a$ one can prove that $\Ext^i(\Delta(t),L)=0$ for $i\leqslant h_-(t)-h_-(L)$
(while the proposition above only guarantees $i\leqslant h_-(t)-h_-(L)-1$). We will assume that Theorem \ref{Thm:crystal_main}
holds, this remark is not used to prove it. The proof closely follows that of
the proposition, the only difference is in the proof of Step 2. Namely, let $L'$ be as in that step.
Then, according to (3) of Proposition \ref{Prop:simples}, either $L'=\tilde{f}L$ or $F^{h_-(L)-1}L'=0$.
To prove that $\Ext^q(\Delta(\bar{t}),L')=0$ in the first case we can use the inductive assumption
since still $L'=L(\mu')$ for $\mu'\in \Lambda_a$. In the second case we can apply Proposition
\ref{Prop:vanishing}. 
\end{Rem}

\subsection{Proof of the main theorem}
Let us prove that  $\tilde{f}L(t)=L(\tilde{f}t)$ by using the induction on $w=\wt(t)$.
The case $\wt(t)=-n$ is obvious -- both sides of the equality are zero. Now suppose that the claim is proved for all $s\in \{+,-\}^n$ with
$\wt(s)<w$. This implies $h_-(s)=h_-(L(s))$ provided $\wt(s)<w$. 

We are going to prove, first, that $h_-(s)=h_-(L(s))$ for all $s$ with $\wt(s)=w$.
%
Suppose that $h_-(L(s))<h_-(s)$. Let $\Delta(t^1),\ldots, \Delta(t^N)$ be the successive subquotients of $E\Delta(\tilde{f}s)$ with $t^k\succ t^{k-1}\succ\ldots\succ t^1$, where $\succ$ is the ordering introduced in the beginning of the previous subsection. Let $j$ be such that $t^j=s$. Let $l$ be the index with $(\tilde{f}s)_l=+$ and $s_l=-$. By Lemma \ref{Lem:comb11},
$h_-^{l+1}(t^i)=h_-(t^j)+1$ for $i>j$. So we can apply Proposition \ref{Prop:vanishing} to $L=L(s),t=t^i, m=h_-(s), k=l+1$.
We get $\Ext^1(\Delta(t^i), L(s))=0$. So $L(s)$ is in the head of $E\Delta(\tilde{f}s)$.
Applying Lemma \ref{Lem:obj_subs}, we see that $d(L(s))\geqslant  d(L(\tilde{f}s))$.
But, thanks to the inductive assumption, the right hand side is just $d(\tilde{f}s)=d(s)$. Now recall that $d(?)=\frac{1}{2}(h_-(?)-\wt(?))$.
It follows that $h_-(L(s))=h_-(s)$. This contradicts $h_-(L(s))<h_-(s)$. So we have $h_-(L(s))\geqslant h_-(s)$. But assertion (3) of Proposition \ref{Prop:simples} implies that the number of $s$ with $h_-(L(s))=h$ coincides with the number of $s$ with $h_-(s)=h$
for any $h$. So we see that $h_-(L(s))=h_-(s)$ for any $s$ with $\wt(s)=w$.

Now we are going to prove that $\tilde{e}L(s)=L(\tilde{e}s)$ for all $s$ with $\wt(s)=w-2$. This is equivalent to $L(\tilde{f}t)=\tilde{f}L(t)$
for all $t$ with $\wt(t)=w$.

First of all, let us remark that $\tilde{e}L(s)=0$ and $\tilde{e}s=0$ are equivalent. Indeed, we know that $h_-(L(s))=h_-(s)$
and hence $h_+(L(s))=h_+(s)$. So we may assume that $\tilde{e}s, \tilde{e}L(s)\neq 0$.
We may also assume that  $\tilde{e}L(s')=L(\tilde{e}s')$ is  proved for all $s'$ such that
 $h_-(s')=h_-(s)$ and $\tilde{e}s\succ\tilde{e}s'$.

Let $\tilde{s}$ denote the $n$-tuple with $\tilde{e}L(s)=L(\tilde{s})$. By what we have seen above, $h_-(\tilde{e}s)=h_-(\tilde{s})=h_-(s)+1$. So $\tilde{e}L(s)$ is one of the simple modules $L$ appearing
in $\head(E\Delta(s))$ with $h_-(L)=h_-(s)+1$. 

Assume that $\tilde{s}\neq \tilde{e}s$.
Let us observe that $\tilde{s}\succ \tilde{e}s$. Indeed, otherwise $\tilde{e}s\succ \tilde{s}$.
Since $h_-(\tilde{s})-1=h_-(s)\geqslant 0$, we see that $s':=\tilde{f}\tilde{s}\neq 0$ and so $\tilde{s}=\tilde{e} s'$.
Since $\tilde{e}s\succ \tilde{e}s'$ and $h_-(s')=h_-(\tilde{s})-1=h_-(s)$,
we can use the inductive assumption and get
$\tilde{e}L(s')=L(\tilde{e}s')=L(\tilde{s})$. But
$\tilde{e}L(s)=L(\tilde{s})$ hence $s=s'$ or, equivalently,
$\tilde{s}=\tilde{e}s$.

So  $\tilde{s}\succ \tilde{e}s$. But then Lemma \ref{Lem:comb11} implies $h_-(\tilde{s})>h_-(\tilde{e} s)$. So we get a contradiction which proves $\tilde{e}L(s)=L(\tilde{e}s)$.
The equality $L(\tilde{f}t)=\tilde{f}L(t)$ for all $t$ with $\wt(t)=w$ follows and we have completed the induction
step for our claim in the beginning of the subsection.

So we have $\tilde{f}L(t)=L(\tilde{f}t)$ as well as $h_-(t)=h_-(L(t))$ for all $t$. Together with standard properties
of crystals, this implies Theorem \ref{Thm:crystal_main}.

\subsection{Application to Cherednik algebras}
Let $\C$ be the field of complex numbers and $\Cat$ be the category $\mathcal{O}_p$ from Subsection \ref{SS_Ch_cycl}.
Pick a multipartition $\mu$ of $n$. Define the {\it depth} $D(\mu)$ inductively by setting $D(\mu)=0$ if $\tilde{e}_z \mu=0$
for all $z\in \mathbb{C}$ and $D(\mu)=1+\max_{z\in \mathbb{C}}(D(\tilde{e}_z\mu))$ else. We remark that $D(\mu)$
does depend on $p$ as the crystal structure on the set of multipartitions does.

Following \cite{Shan} and \cite{shanvasserot} we will interpret $D(\mu)$ representation theoretically. Namely, we can
view $L(\mu)$ as a $W$-equivariant coherent sheaf on $\h=\mathbb{C}^n$. Its support is known  to be of the form $W \h_{i,j}$,
where $\h_{i,j}$  is the space of all $n$-tuples $(x_1,\ldots,x_n)$ with $x_{i+1}=x_{i+2}=\ldots=x_{i+e}, x_{i+e+1}=x_{i+e+2}=\ldots=x_{i+2e},
\ldots, x_{i+(j-1)e+1}=\ldots=x_{i+je}, x_{i+je+1}=\ldots=x_n=0$. Here $e$ is the denominator of $\kappa$ if $\kappa$ is rational
(recall that we assume that $\kappa$ is non-integral) and $e=\infty$ if $\kappa$ is irrational (so we do not have $e$-tuples
of equal coordinates).  See \cite[Remark 3.7]{shanvasserot} for a proof. Now \cite[Proposition 3.16, Corollary 3.18]{shanvasserot}
imply that $i=D(\mu)$. In particular, if $\kappa$ is irrational we can recover the support of $L(\mu)$ completely.
This generalizes \cite[Corollary 6.9.3]{GL}.

\end{document}